\newcommand{\bx}{\overline{x}}
\newcommand{\by}{\overline{y}}
\newcommand{\bv}{\overline{v}}
\newcommand{\Cech}{\v{C}ech}
\newcommand{\R}{\mathbb{R}}
\newcommand{\ds}{\displaystyle}
\newtheorem{thm}{Theorem}[section]
\newtheorem{prop}{Proposition}[section]
\newcommand {\mm}[1] {\ifmmode{#1}\else{\mbox{\(#1\)}}\fi}
\newcommand{\Sspace}        {\mm{{\mathbb S}}}
\newcommand\Dg{\mathrm{Dg}}
\newcommand{\IC}{\mathsf{C}}
\newcommand{\rank}{\mm{\mathrm{rank}}}
\newcommand{\bdr}{\mm{\mathrm{\partial}}}
\definecolor{darkblue}{rgb}{0.0, 0.0, 0.8}
\definecolor{darkred}{rgb}{0.8, 0.0, 0.0}
\definecolor{darkgreen}{rgb}{0.0, 0.8, 0.0}
\newcommand{\ddelta}		{{d_{\IC_\delta}}}
\newcommand{\newGhat}	{{\tilde{G}}}
\begin{document}

\title*{A Complete Characterization of the 1-Dimensional Intrinsic \Cech{} Persistence Diagrams for Metric Graphs}
\titlerunning{1-D Intrinsic \Cech{} Persistence Diagrams for Metric Graphs} %for an abbreviated version of
% your contribution title if the original one is too long
\author{Ellen Gasparovic, Maria Gommel, Emilie Purvine, Radmila Sazdanovic, \\Bei Wang, Yusu Wang, and Lori Ziegelmeier}
\authorrunning{Gasparovic, Gommel, Purvine, Sazdanovic, Wang, Wang, and Ziegelmeier} %for an abbreviated version of
% your contribution title if the original one is too long
%Or should it be Gasparovic, et al.?
\institute{Ellen Gasparovic \at Union College, 807 Union Street, Schenectady, NY 12309, \email{gasparoe@union.edu}
\and Maria Gommel \at University of Iowa, 14 MacLean Hall, Iowa City, IA 52242,
\email{maria-gommel@uiowa.edu}
\and Emilie Purvine \at Pacific Northwest National Laboratory, 1100 Dexter Ave N., Seattle, WA 98109,\\ \email{emilie.purvine@pnnl.gov}
\and Radmila Sazdanovic \at NC State University, PO Box 8205, Raleigh NC 27695, \email{rsazdan@ncsu.edu}
\and Bei Wang \at University of Utah, 72 South Central Campus Drive,
Salt Lake City, UT 84112,\\  \email{beiwang@sci.utah.edu}
\and Yusu Wang \at The Ohio State University, 2015 Neil Ave., Columbus, OH 43210, \email{yusu@cse.ohio-state.edu}
\and Lori Ziegelmeier \at Macalester College, 1600 Grand Avenue, Saint Paul, MN 55104, \email{lziegel1@macalester.edu}}
%
% Use the package "url.sty" to avoid
% problems with special characters
% used in your e-mail or web address
%
\maketitle

%\abstract*{Metric graphs are special types of metric spaces used to model and represent simple, ubiquitous, geometric relations in data such as biological networks, social networks, and road networks. We are interested in giving a qualitative description of metric graphs using topological summaries. In particular, we provide a complete characterization of the $1$-dimensional intrinsic \Cech{} persistence diagrams for metric graphs using persistent homology. Together with complementary results by Adamaszek et. al, which imply results on intrinsic \Cech{} persistence diagrams in all dimensions for a single cycle, our results constitute important steps toward characterizing intrinsic \Cech{} persistence diagrams for arbitrary metric graphs across all dimensions. }

\abstract{Metric graphs are special types of metric spaces used to model and represent simple, ubiquitous, geometric relations in data such as biological networks, social networks, and road networks. We are interested in giving a qualitative description of metric graphs using topological summaries. In particular, we provide a complete characterization of the $1$-dimensional intrinsic \Cech{} persistence diagrams for finite metric graphs using persistent homology. Together with complementary results by Adamaszek et al., which imply results on intrinsic \Cech{} persistence diagrams in all dimensions for a single cycle, our results constitute important steps toward characterizing intrinsic \Cech{} persistence diagrams for arbitrary finite metric graphs across all dimensions. }
%--------------------------------------------------------------

\section{Introduction}
\label{sec:introduction}

Graphs are ubiquitous in data analysis, often used to model social, biological and technological systems.
Often, data with a notion of distance can be modeled by a metric graph.
A graph is a \emph{metric graph} if each edge is assigned a positive length and if the graph is equipped with a natural metric where the distance between any two points of the graph (not necessarily vertices) is defined to be the minimum length of all paths from one to the other~\cite{Kuchment2004}.
A metric graph is therefore a special type of metric space that captures simple forms of geometric relations in data that arise in both abstract and practical settings, such as biological networks, social networks and road networks.
For example, the movement patterns that GPS systems trace for vehicles can be modeled as a metric graph for location-aware applications.
Brain functional networks as metric graphs capture the blood-oxygen-level dependent
signal correlations among different areas of the brain~\cite{BiswalYetkinHaughton1995}.
Social networks as metric graphs can encode strengths of influence between social entities (e.g.,~persons or corporations).
Extracting the topological
structures of such networks can provide powerful insights for navigating and understanding their underlying data.

Our work aims to describe topological structures of metric graphs by using persistent homology, a fundamental tool in topological data analysis that has been used in many
applications to measure and compare topological features of shapes and functions~\cite{EdelsbrunnerHarer2008}.
In this work, we give a qualitative description of information that can be captured from metric graphs using topological, persistence-based summaries.
Theorem \ref{thm:main}, the main theorem in this paper,  provides  a complete characterization of the persistence diagrams in dimension $1$ for metric graphs in a particular intrinsic setting.

\begin{thm}
\label{thm:main}
Let $G$ be a finite metric graph of genus $g$ with shortest system of loops $\{c^*_1, \ldots, c^*_g\}$, and for each $i=1,\ldots,g$,
let $|c^*_i| = \ell_i$ be the length of the $i^{th}$ loop, with $\ell_i \leq \ell_j$ for all $i \leq j$. Then the $1$-dimensional intrinsic \Cech{} persistence diagram of $G$, denoted $\Dg_1 IC_G$, consists of the following collection of points on the $y$-axis:
\[
\Dg_1IC_G =  \left\{ \left(0, \frac{\ell_i}{4}\right) : 1\leq i \leq g\right\}.
\]
%\[Dg_1 IC_G = \left\{ \left[0, \frac{\ell_i}{4}\right) : 1\leq i \leq g\right\}.\]
\end{thm}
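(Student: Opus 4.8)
I would compute the full rank invariant of the $1$-dimensional persistence module $r\mapsto H_1(IC_G(r);\mathbb{F})$ over a fixed coefficient field $\mathbb{F}$, reading the shortest system $\{c_1^*,\dots,c_g^*\}$ as the greedy one (each $c_i^*$ a shortest loop whose class lies outside $\mathrm{span}\{[c_1^*],\dots,[c_{i-1}^*]\}$). Explicitly, the goal is
\[
\rank\big(H_1(IC_G(a))\to H_1(IC_G(b))\big)=\#\{\,i:\ell_i>4b\,\}\qquad(0<a\le b).
\]
Granting this, the theorem follows at once: the module is pointwise finite dimensional, the right-hand side is independent of $a$ so no class is born after scale $0$, letting $b\to 0$ produces exactly $g$ bars, and the formula forces their death scales to be $\ell_1/4,\dots,\ell_g/4$ with multiplicity, i.e.\ $\Dg_1 IC_G=\{(0,\ell_i/4):1\le i\le g\}$. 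Two standard facts are needed at the bottom of the filtration: for $r_0>0$ small the cover of $G$ by radius-$r_0$ balls is good (balls and their finite intersections are tree-like), so by the nerve lemma $IC_G(r_0)\simeq G$ and $H_1(IC_G(r_0))\cong\mathbb{F}^g$ with basis $\{[c_i^*]\}$; and every loop of $G$, once finely subdivided, determines a class in each $H_1(IC_G(r))$, compatibly with the comparison maps and with this identification.

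\textbf{Upper bound.} First I would show each map $H_1(IC_G(a))\to H_1(IC_G(b))$ is surjective: in a $1$-cycle of $IC_G(b)$, replace each edge $\{x,y\}$ by the two edges through a geodesic midpoint --- a genuine $2$-simplex of $IC_G(b)$ --- and iterate until every edge has length at most $2a$; the result is homologous to the original and supported in $IC_G(a)$. With the small-scale identification, $H_1(IC_G(r))$ is then spanned by the images of $[c_1^*],\dots,[c_g^*]$ for every $r>0$. Then, for $r\ge\ell_i/4$, I claim $[c_i^*]$ vanishes in $H_1(IC_G(r))$: placing $p_0,p_1,p_2,p_3$ equally spaced along $c_i^*$, each of the four arcs has length $\ell_i/4\le r$, so each $p_j$ lies within $r$ of both endpoints of its arc; hence every triple $\{p_j,p_{j+1},p_{j+2}\}$ is a $2$-simplex (witnessed by $p_{j+1}$), the $4$-cycle $\sum_j[p_jp_{j+1}]$ bounds, and a short coning argument over the subdivided arcs identifies $[c_i^*]$ with that $4$-cycle. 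Thus the image of $H_1(IC_G(a))\to H_1(IC_G(b))$ is spanned by the $[c_i^*]$ with $\ell_i>4b$, so its rank is at most $\#\{i:\ell_i>4b\}$.

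\textbf{Lower bound.} Fix $b>0$ and $i$ with $\ell_i>4b$, and let $\omega_i\in H^1(G;\mathbb{F})$ be the dual of $[c_i^*]$ in the basis $\{[c_j^*]\}$; by the greedy choice of the system, a loop of $G$ of length $<\ell_i$ has class in $\mathrm{span}\{[c_1^*],\dots,[c_{i-1}^*]\}$ and so is annihilated by $\omega_i$. Define a simplicial $1$-cochain $\phi_i$ on $IC_G(b)$ by $\phi_i(\{x,y\})=\langle\omega_i,\gamma_{xy}\rangle$ for a geodesic $\gamma_{xy}$ from $x$ to $y$. This is well defined since two geodesics between points at distance $\le 2b$ form a loop of length $\le 4b<\ell_i$, on which $\omega_i$ vanishes; and it is a cocycle since on a triangle $\{x,y,z\}$ of $IC_G(b)$ with common witness $p$, inserting the spokes from $p$ writes the triangle cycle $\gamma_{xy}+\gamma_{yz}-\gamma_{xz}$ as a combination of three loops of the form $p\to u\to v\to p$ with $u,v$ among $x,y,z$, each of length $\le 4b<\ell_i$ and hence killed by $\omega_i$. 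Evaluating $[\phi_i]$ on $[c_j^*]$ with a fine subdivision --- and using that a loop shorter than the girth $\ell_1$ is nullhomologous --- yields $\delta_{ij}$. Hence the classes $\{[c_j^*]:\ell_j>4b\}$, all of which lie in the image of $H_1(IC_G(a))$, are independent in $H_1(IC_G(b))$, so the rank is at least $\#\{j:\ell_j>4b\}$; with the upper bound this gives the displayed formula (the borderline scales $\ell_i=4b$ being decided by the closed-ball convention).

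\textbf{Main obstacle.} The conceptual heart is the cochain construction in the lower bound, and that is where I expect to have to work hardest: it is exactly here that the minimality of the system of loops is indispensable, and the threshold $\ell_i/4$ appears for a clean reason --- an edge of $IC_G(b)$ spans a geodesic of length $\le 2b$, a filled triangle contributes ``defect'' loops of length $\le 4b$, and $\omega_i$ detects none of these precisely when $4b<\ell_i$. The remaining points are technical rather than conceptual: justifying the nerve lemma for the (infinite) ball cover of a metric graph, e.g.\ by reducing to a finite dense net and passing to a limit, and checking that the subdivision and coning constructions really realize the asserted homologies in the large complexes $IC_G(r)$ --- the recurring observation being that a subarc of a loop of length $\le r$ sits inside the $r$-ball about either of its endpoints, so such subarcs may be contracted or rerouted for free.
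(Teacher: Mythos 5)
Your proposal is correct and reaches the same conclusion by the same high-level plan — nerve lemma at small scale, loops become trivial by $\ell_i/4$ via a coning/subdivision argument, and nothing new is born — but the decisive step (that $[c_i^*]$ does not die before $\ell_i/4$) is carried out by a genuinely different mechanism. The paper builds a chain map $\rho:\IC_\epsilon^{(1)}\to\IC_\delta^{(1)}$ sending each edge to a short ($\le 2\epsilon$) path in $\IC_\delta$ through a geodesic waypoint, pushes a putative bounding $2$-chain through $\rho$, observes that each filled triangle maps to a sum of three cycles of length $<4\epsilon<\ell_i$, and then invokes the shortest-basis property in $\IC_\delta$ (Lemmas~\ref{lemma:cycle-homotopy} and~\ref{lemma:shortest}) to force the coefficients to vanish; the same $\rho$ then doubles as a right inverse up to homotopy to prove the spanning condition. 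You instead build a simplicial $1$-cocycle $\phi_i$ directly on $\IC_\epsilon$ by pulling back the cohomology dual of $[c_i^*]$ along geodesics, and you show it is well defined and closed precisely because geodesic-swap loops and triangle-defect loops have length $<4\epsilon<\ell_i$ and are killed by $\omega_i$ — a self-contained certificate of nonvanishing that never leaves $\IC_\epsilon$, bypasses $\rho$, the $\delta$-discretizations $\hat G,\tilde G$, and Lemma~\ref{lemma:shortest}, and makes transparent why $\ell_i/4$ is the threshold (edges cost $2\epsilon$, filled triangles cost $4\epsilon$, and $\omega_i$ detects neither when $4\epsilon<\ell_i$). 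Both arguments rest on the same matroid fact — any loop of length $<\ell_i$ lies in $\mathrm{span}\{[c_1^*],\dots,[c_{i-1}^*]\}$ — used in the paper through the shortest-basis machinery and in your proof directly. Your surjectivity argument by iterated geodesic-midpoint subdivision is morally the same idea as the paper's use of $\rho$ to reduce scale, just packaged inside $\IC_\epsilon$ itself; and your framing via the full rank invariant $\rank\bigl(H_1(\IC_a)\to H_1(\IC_b)\bigr)=\#\{i:\ell_i>4b\}$ is an equivalent (and arguably cleaner) repackaging of the paper's Proposition~\ref{prop:main}. One small caution: as you note, the open-ball convention used in the paper makes the boundary case $\epsilon=\ell_i/4$ delicate, and your ``$4$ equally spaced points'' coning argument really needs $\epsilon>\ell_i/4$ with open balls (the death value is then the infimum), so that detail is worth keeping explicit in a full write-up.
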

Terms in the statement of this main theorem, including \emph{intrinsic \Cech{} persistence diagram} and \emph{shortest system of loops}, will be rigorously defined in Sections \ref{sec:background} and \ref{subsec:update-notation}. Intuitively, this theorem provides a way to count and measure the minimal cycles in a metric graph using persistent homology. The 1-dimensional intrinsic \Cech{} persistence diagram serves as a kind of fingerprint for the graph's cycle set. Note also that the use of the word \emph{genus} is not referring to the more traditional genus of a surface in which the graph can be embedded. Instead, we use genus to mean the number of cycles in a minimal generating set of the homology of the graph. This will be made more precise in Section \ref{sec-update} when we define the shortest system of loops.

%\subsection*{Related Work.}
\vspace{2mm}
\noindent\textbf{Related Work.}
The work of Adamaszek et al.~\cite{AdamaszekAdams2015,AdamaszekAdamsFrick2016} is most relevant to ours, as it helps to characterize persistence diagrams in all dimensions for a metric graph consisting of a single loop.
In~\cite{AdamaszekAdamsFrick2016}, the authors show that the intrinsic Vietoris-Rips or \Cech{} complex of $n$ points in the circle $\Sspace^1$, at any scale $r$, is homotopy equivalent to either a point, an odd-dimensional sphere, or a wedge sum of spheres of the same even dimension. The results in~\cite{AdamaszekAdamsFrick2016} further imply that the $1$-dimensional homology group of a metric graph with a single loop is either rank $1$ (in the case where the associated intrinsic complex is homotopy equivalent to $\Sspace^1$) or rank $0$ (in all other cases).
One can then show that the $1$-dimensional persistence diagram consists of the
single point $\ds \left(0, \frac{\ell}{4}\right)$ or $\ds \left(0, \frac{\ell}{6}\right)$ in the case of the \Cech{} or
Vietoris-Rips filtration, respectively, where $\ell$ is the length of the loop~\cite{AdamaszekAdams2015}. (Note: here, we are assuming the convention that the Vietoris-Rips complex at scale $r$ contains all simplices of diameter at most $2r$. This definition differs by a factor of 2 from that used in~\cite{AdamaszekAdams2015}.)

%\todoRS{Did not clarify l/6 yet}
%\todoMariaInline{(MARIA)  Were we just going to take the $\frac{\ell}{4}$ and $\frac{\ell}{6}$ part out?}

In this paper, we generalize the above result in \cite{AdamaszekAdams2015}  from a metric graph with a single loop to a metric graph containing an arbitrary, finite set of loops in homological dimension 1. This characterization of persistence diagrams in dimension $1$ of an arbitrary finite metric graph complements the work in~\cite{AdamaszekAdams2015} and constitutes an important step toward the characterization of the intrinsic \Cech{} persistence diagrams of \emph{arbitrary} metric graphs across \emph{all} dimensions.

%Consider a graph which contains a single cycle, it is a metric graph when the edge weight between any two points in the graph captures the shortest path length between them.
In addition to the \Cech{} and Vietoris-Rips complexes, there are a number of other types of complexes or combinatorial structures related to
graphs. %\todoRS{changed the prevois sentence, should we mention clique and D-nbhd complexes?}
%For instance, the clique complex of a graph $G$ is constructed from subsets of vertices of $G$ forming the complete subgraphs, or cliques, of $G$.
In~\cite{Previte2014}, the author studies the relationship between properties of a graph $G$ and the homology of an associated neighborhood complex.
The paper~\cite{Taylan2016}
contains a study of so-called devoid complexes of graphs where simplices
correspond to vertex sets whose induced subgraphs do not contain certain
forbidden subgraphs. However, the neighborhood and devoid complexes are more related to structural, rather than metric, properties of graphs, so we
turn our attention in the remainder of this paper to the more metric-derived \Cech{} complex.\\

%\todoRS{I agree with Emily so I moved it here, when the discussion of graphs is over...}
%\Cech{} and Vietoris-Rips complexes built on top of (totally bounded) metric spaces are analyzed in \cite{ChazalSilvaOudot2014}.
%For example, the authors provide stability results for the persistent homology of such complexes with respect to the Gromov-Hausdorff distance.\todoEmilie{This paragraph doesn't seem to fit very well to me. I think we need to tie it back to our work or cut it. What does it mean in the context of metric graphs?}

%\subsection*{Outline.}
\noindent\textbf{Outline.}
The outline of the paper is as follows. In Section~\ref{sec:background}, we recall the necessary background on persistent homology, in particular for the case that the underlying topological space is a metric graph. Section~\ref{sec-update} focuses on establishing the fundamental details of the relationship between a metric graph and its associated intrinsic \Cech{} complex. We prove Theorem~\ref{thm:main} in Section~\ref{sec:proof}. Finally, we discuss our results and plans for future work in Section~\ref{sec:future}.

\section{Background}
\label{sec:background}

%\subsection{Metric graphs}
%\label{subsec:graphs}

%A \emph{metric space} is an ordered pair $(\Xspace,d_\Xspace)$ where $\Xspace$ is a set and $d_\Xspace: \Xspace \times \Xspace \to \Rspace$ is a metric on $\Xspace$ satisfying
%\begin{enumerate}
%	\item triangle inequality $d_\Xspace(x,y) \leq d_\Xspace(x,z) + d_\Xspace(y,z)$,
%	\item symmetry $d_\Xspace(x,y) = d_\Xspace(y,x)$,
%	\item non-negativity $d_\Xspace(x,y) \geq 0$, and
%	\item identity $d_\Xspace(x,y)=0$ iff $x=y$, $\forall x, y, z \in \Xspace$.
%\end{enumerate}
%A metric space $(\Xspace, d_\Xspace)$ is a \emph{path metric space} if the distance between any pair of points $x, y \in \Xspace$ is equal to the infimum of the lengths of the continuous curves joining them~\cite{Gromov2007}.  A \emph{metric graph} $(G, d_G)$ is a {path metric space} that is homeomorphic to a $1$-dimensional stratified space consisting of $0$-dimensional pieces (i.e. vertices) and $1$-dimensional pieces (i.e. edges or loops) glued together~\cite{AanjaneyaChazalChen2012}. It may or may not be embedded into a Euclidean space. When the metric $d_G$ is obvious (as in our setting, it is the intrinsic metric on $G$), we simply refer to $G$ as the metric graph.

\subsection{Homology}
\label{subsec:hom}

Homology is an invariant that characterizes properties of a topological space $X$. In particular, the $k$-dimensional holes (connected components, loops, trapped volumes, etc.) of a space generate a homology group, $H_k(X)$. The rank of this group is referred to as the \emph{$k$-th Betti number} $\beta_k$ and counts the number of $k$-dimensional holes of $X$. We provide a brief overview of simplicial homology below. For a comprehensive study, see \cite{hatcher2002algebraic, munkres1984elements}. For a more categorical viewpoint, see \cite{rotman2009introduction}, and for a discussion of cubical complexes, see \cite{massey1991basic}.
We also note that singular homology is a related concept, and in fact is isomorphic to simplicial homology on spaces which can be triangulated \cite{hatcher2002algebraic}.
%In our context, we do not differentiate the notations for singular homology and simplicial homology; we work with first %$1-$dimensional
Although a priori one must work with singular homology of a metric graph $G$, rather than simplicial homology, %of the intrinsic \Cech{} complex $\IC_\epsilon$ (defined at the end of this section), denoted $H_1(\IC_\epsilon)$.
metric graphs can be triangulated using a subdivision or discretization of the edges. Therefore, we will work with simplicial homology in the remainder of this paper.
%\todoRS{I would add Munkres, Massey, Dugunji and Rotmann, just for variety. For example Rotman is more category-oriented, and Massey uses cubical rather than simplicial).}

A \emph{simplicial complex} $S$ is a set consisting of a finite collection of $k$-simplices, where a $k$-\emph{simplex}, given by $\sigma =  [v_0,v_1,\ldots,v_k]$, is the convex hull of distinct points $v_0,v_1,\ldots,v_k$.   Thus, a $0$-simplex is a vertex, a $1$-simplex is an edge, a $2$-simplex is a filled-in triangle, a $3$-simplex is a solid tetrahedron, and so on.
%\todoRS{do we want to add what a sub-simplex is?}
The $k$-simplices must satisfy the following: (1) if $\sigma$ is a simplex in $S$, then all lower-dimensional subsets of $\sigma$, called \emph{subsimplices}, are also in $S$; and (2) two simplices are either disjoint or intersect in a lower-dimensional simplex contained in $S$.

An algebraic structure of a  {\it vector space} or an {\it  $R$-module} over some ring $R$ is imposed on the simplicial complex $S$ to uncover the homology of the underlying topological space as follows. The $k$-simplices form a basis for a vector space, $S^{(k)}$, over some ground field (or ring) $\mathbb{F}$.  We call the vector space $S^{(k)}$ the {\it $k$-dimensional chain group} over the simplicial complex $S$. The finite field $\mathbb{Z}_p$ (where $p$ is a small prime), $\mathbb{Z}$, and $\mathbb{Q}$ are common choices for the ground field or ring. In this paper we will work over $\mathbb{Z}_2.$ Furthermore, for each pair of consecutive vector spaces there is a  linear map, $\delta_k: S^{(k)} \rightarrow S^{(k-1)}$, turning the sequence of chain groups into a {\it chain complex}:
\[\cdots \rightarrow S^{(k+1)} \xrightarrow{\delta_{k+1}} S^{(k)} \xrightarrow{\delta_{k}} S^{(k-1)} \cdots.\]
These maps are known as \emph{boundary operators}, taking each $k$-simplex to an alternating sum of its $(k-1)$-subsimplices, its boundary. More precisely, if $[v_0,v_1,\ldots,v_k]$ is a $k$-simplex, the boundary map $\delta_k: S^{(k)} \rightarrow S^{(k-1)}$ is defined by
$$\delta_k([v_0,v_1, \ldots, v_k]) = \sum_{i=0}^k (-1)^i [v_0, \ldots, \hat{v_i}, \ldots, v_k]$$
where $[v_0, \ldots, \hat{v_i}, \ldots, v_k]$ is the $(k-1)$-simplex obtained from $[v_0, \ldots, v_k]$ by removing vertex $v_i$.

The simplicial homology, $H_k(S)$,  of a simplicial complex $S$ is defined based on two subspaces of the vector space $S^{(k)}$: $Z_k=ker(\delta_k)$ known as \emph{$k$-cycles}, and $B_k=im(\delta_{k+1})=\delta_{k+1}(S^{(k+1)})$ known as \emph{$k$-boundaries}. Since the boundary operator satisfies the property $\delta_k \circ \delta_{k+1}=0$ for every $0 \leq k \leq \,$ dim$(S)$, the set of $k$-boundaries is contained in the set of $k$-cycles. Then, $H_k(S)=Z_k/B_k$ consists of the equivalence classes of $k$-cycles that are not $k+1$-boundaries (up to {\it homotopy}).
The elements of    $H_k(S)$ are called homology classes  and can thus be thought of  as equivalence classes represented by cycles enclosing $k$th order holes that differ by elements of a boundary. The rank of the associated homology group $H_k(S)$ is the number of distinct $k$ dimensional holes, and is referred to as the $k$th Betti number, denoted $\beta_k$. %\todoRS{added Betti numbers}

\subsection{Persistent homology and metric graphs}
\label{subsec:ph+graphs}

%The basic idea behind persistence is to build a family of simplicial complexes by replacing the data points with balls of a certain radius.  This radius serves as a persistence parameter. The homology of this family, obtained for strictly increasing parameters, allows one to  keep track of the appearance and disappearance of topological features--connected components, tunnels, cavities and their higher-dimensional holes--as the scale parameter changes.
%G. Carlsson, Topology and data, Bull. Amer. Math. Soc. (N.S.), vol. 46, no. (2) (2009) 255 ? 308, V. de Silva, R. Ghrist, Coverage in sensor networks via persistent homology. Algebraic and Geometric
%Topology, vol. 7 (2007) 339?358. R. Ghrist, Barcodes: the persistent topology of data. Bull. Amer. Math. Soc. (N.S.), vol 45, no. 1 (2008) 61? 75.
%\todoRS{I have added an introductory paragraph and few other references hidden in the text not in bibliography until we make a decision, I'll be fine if we decide that we do not need it ...}

In \emph{persistent homology}, rather than studying the topological structure of a single space, $X$, one considers how the homology changes over an increasing sequence of subspaces.
Given a topological space $X$ and a filtration of this space,
$$X_1 \subseteq X_2 \subseteq X_3 \subseteq \ldots \subseteq X_m=X,$$ applying the homology functor gives a sequence of homology groups, called a \emph{persistence module}, induced by inclusion of the filtration
$$H_k(X_1) \rightarrow H_k(X_2) \rightarrow \ldots \rightarrow H_k(X_m).$$  The arrows above indicate the homomorphisms between homology groups induced by the filtration of the space $X$.

A filtration of a topological space $X$ may be defined in a number of ways. By considering a continuous function $f:X \rightarrow \mathbb{R}$, one may define the \emph{sublevel set filtration}
$$f^{-1}(-\infty,a_1) \subseteq  f^{-1}(-\infty,a_2)\subseteq  \ldots \subseteq  f^{-1}(-\infty,\infty).$$
Another approach is to build a sequence of simplicial complexes on a set of points using, for instance, the \emph{Vietoris-Rips filtration} \cite{citeulike:2795523} or the \emph{intrinsic \Cech{} filtration} \cite{ChazalSilvaOudot2014} discussed below.
\emph{Persistent homology} \cite{carlsson2009topology, EdelsbrunnerHarer2008} then tracks elements of each homology group through the filtration. This information may be displayed in a \emph{persistence diagram} for each homology dimension $k$. A persistence diagram is a set of points in the plane together with an infinite number of points along the diagonal where each point $(x,y)$ corresponds to a homological element that appears (is `born') at $H_k(X_x)$ and which no longer remains (`dies') at $H_k(X_y)$. See Figure \ref{barcode_PHdiag} for an example persistence diagram.
%\todoRS{Do we want to include a picture, just one as an example of bar codes and persistence diagrams?}
Distinct topological features may have the same birth and death coordinates; therefore, a persistence diagram is actually a multiset of points. Since all topological features die after they are born, this is an embedding into the upper half plane above the diagonal $y=x$. Points near the diagonal are often considered noise while those further from the diagonal represent more robust topological features. For a detailed description of  applications
of persistent homology to various problems in the experimental sciences, see \cite{carlsson2009topology, deSilva_Ghrist_AGT:2007, citeulike:2795523}.

\begin{figure}
  \begin{center}
    \includegraphics[width=0.4\textwidth]{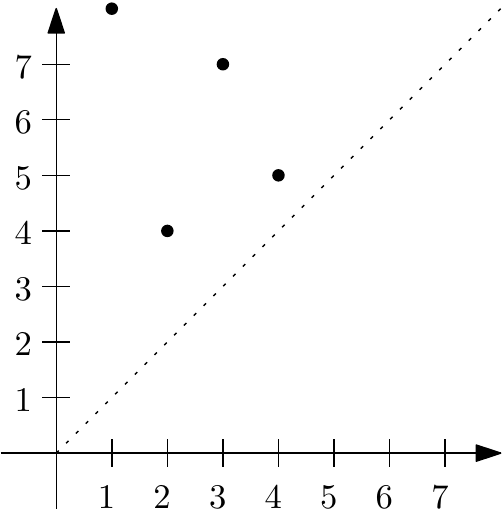}
    \caption{An example persistence diagram with four points: (1, 8), (2, 4), (3, 7), and (4, 5) corresponding to the birth and death values for distinct topological features.}
    \label{barcode_PHdiag}
  \end{center}
\end{figure}

In this paper, we focus on understanding the topological structure of a finite \emph{metric graph} in homology dimension $k=1$.
Given a graph $G=(V,E)$, we define a finite metric graph to be a metric space $(|G|,d_G)$ that is homeomorphic to a $1$-dimensional finite stratified space consisting of a finite number of $0$-dimensional pieces (i.e. vertices) and $1$-dimensional pieces (i.e. edges or loops) glued together, as described in \cite{AanjaneyaChazalChen2012, DeyShiWang2015}.
More formally, any graph $G$ with vertex set $V$ and edge set $E$, together with a length function, $length: E \rightarrow \mathbb{R}_{\geq 0}$, on $E$ that assigns lengths to edges in $E$, gives rise to a metric graph $(|G|,d_G)$ where $|G|$ is a geometric realization of $G$ and $d_G$ is defined in the following manner.
Using the notation of \cite{DeyShiWang2015}, let $e$ denote an edge in $E$ with $|e|$ being its image in $|G|,$ let $e : [0, length(e)] \rightarrow |e|$ be the arclength parametrization, and define $d_G(x,y)=|e^{-1}(y)-e^{-1}(x)|$ (with the bars indicating the absolute value) for any $x,y \in |e|$.
Now, this definition of $d_G(x,y)$ enables one to define the length of any given path between two points in $|G|$ by first restricting the path to edges in $G$ and then summing the lengths.
Then one may define the distance $d_G(u,v)$ between any pair of points $u,v \in |G|$ to be the minimum length of any path in $|G|$ between $u$ and $v$.
In Figure \ref{fig:sidebyside} we show a graph side-by-side with its corresponding metric graph.
Note that in the metric graph we have removed the vertices to emphasize that \emph{all} points along the edges are vertices.

\begin{figure}
\centering
  \includegraphics[width=0.8\linewidth]{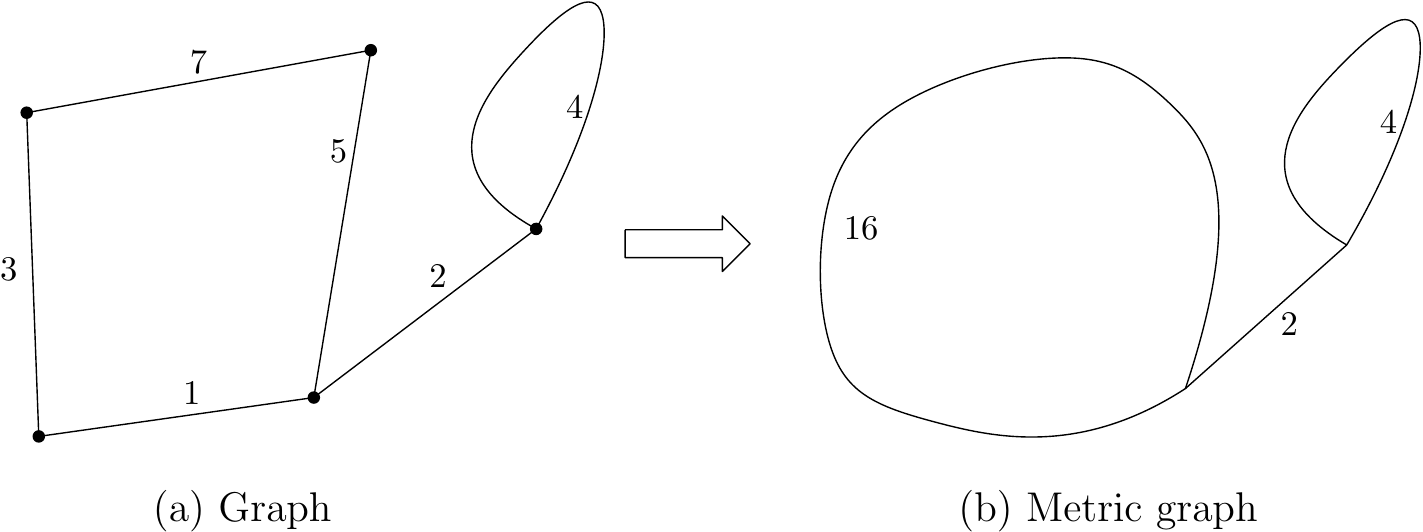}
  \caption{A traditional graph with 5 vertices and 6 edges (including one self-loop) alongside its corresponding metric graph.}
  \label{fig:sidebyside}
\end{figure}

We consider a simplicial complex built on a metric graph as follows.
Let $(G,d_G)$ be a metric graph with geometric realization $|G|$.
For any point $x\in |G|$, we define the set $B(x, \epsilon):= \{y \in |G| : d_G(x,y) < \epsilon \}$, and we let $U_\epsilon:= \{B(x,\epsilon) : x \in |G|\}$ be an open cover.
Recall that the set $|G|$ consists of all vertices and every point along an edge, an uncountable set of points.
Therefore $U_\epsilon$ is an uncountable cover.
The \emph{nerve} of a family of sets $(Y_i)_{i \in I}$ is the abstract simplicial complex defined on the vertex set $I$ by the rule that a finite set $\sigma \subseteq I$ is in the nerve if and only if $\bigcap_{i\in \sigma} Y_i \neq \emptyset$.
We let $\IC_\epsilon$ denote the nerve of $U_\epsilon$, referred to as the \emph{intrinsic \Cech{} complex}.
In Figure \ref{fig:ex_cech} we show an example subset of $U_\epsilon$ and subcomplex of $\IC_\epsilon$ for the metric graph shown in Figure \ref{fig:sidebyside}(b).
We illustrate a finite number of $B(x, \epsilon)$ intrinsic balls and their corresponding nerve.

\begin{figure}
\centering
  \includegraphics[width=\linewidth]{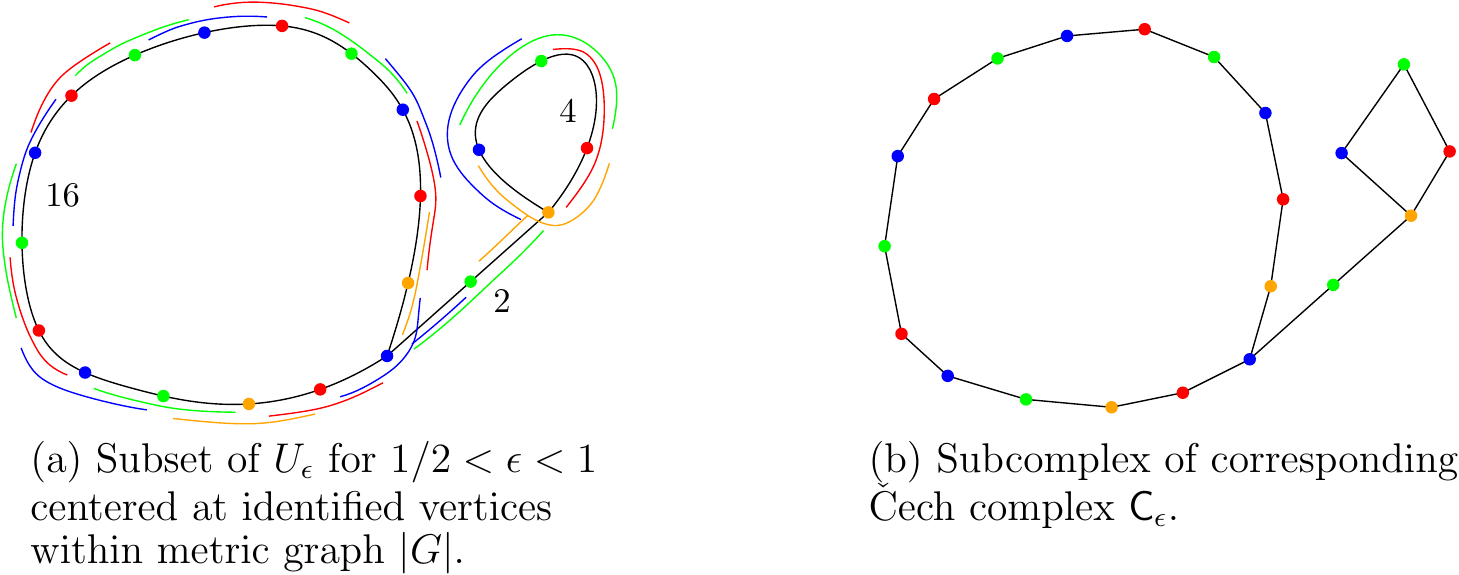}
  \caption{An example cover and corresponding nerve which we have colored to distinguish between overlapping balls and to help illustrate the correspondence between the balls  and their associated vertex in the nerve complex.}
  \label{fig:ex_cech}
\end{figure}

Let $\IC_{\epsilon}^{k}$ denote its $k$-skeleton, where there is a bijection between the vertices in $\IC_{\epsilon}^{0}$ and the points in $G$.
The associated \emph{intrinsic \Cech{} filtration} is defined as the set of inclusion
maps
\[ \{\IC_\epsilon \hookrightarrow \IC_{\epsilon'}\}_{\forall 0 \leq \epsilon \leq \epsilon'}. \]
The intrinsic \Cech{} filtration on the metric graph $G$ induces the persistence module
\[ \{H_{k}(\IC_\epsilon) \rightarrow H_{k}(\IC_{\epsilon'})\}_{\forall 0 \leq \epsilon \leq \epsilon'}\]
in any dimension $k$, from which we obtain the \emph{intrinsic \Cech{} persistence diagrams}, denoted by $\Dg_k IC_G$. In this paper, we shall only be interested in $\Dg_1IC_G$.
Finally, given an intrinsic \Cech{} complex $\IC_\delta$, we denote its $k$-dimensional chain group as $\IC_\delta^{(k)}$.

\section{From Graphs to Intrinsic \Cech{} Complexes}
\label{sec-update}

\subsection{Overview and Relevant Notations}
\label{subsec:update-notation}

We first introduce a few technical definitions and results that will enable us to prove Theorem~\ref{thm:main}. These results will provide a formal way of thinking of properties of a graph and its corresponding \Cech{} complex $\IC_\delta$, for a sufficiently small $\delta$. This will be achieved in four steps:
\begin{itemize}
\item Formal definitions of a shortest system of loops in a graph, shortest path metric on $\IC_\delta$ (Lemma~\ref{claim:equaldist}), shortest basis of $H_1(\IC_\delta)$, and $\delta$-discretization of a graph.
\item The graph $G$  has the same homotopy type as its associated intrinsic \Cech{} complex $\IC_\delta$ for a sufficiently small $\delta > 0$ (Lemma \ref{lemma:homotopy-equivalence}).
\item The inclusion of any $\delta$-discretization $\hat{G}$ of the graph $G$ into $\IC_\delta$ induces an isomorphism between their first homology groups (Lemma \ref{lemma:cycle-homotopy}). This in turn connects a shortest system of loops for $G$ to a basis for the first homology group $H_1(\IC_\delta)$ of the simplicial complex $\IC_\delta$.
%$H_1(\IC_\delta)$ is isomorphic to the first homology of a $\delta$-discretization $\hat{G}$ of graph $G.$ Combined with the previous result this establishes a bijection between cycle basis of a graph and certain ``shortest" basis of $H_1(\IC_\delta)$ (Lemma \ref{lemma:cycle-homotopy}).
\item The shortest basis of $H_1(\IC_\delta)$ corresponds to the shortest system of loops for $G$ (Lemma \ref{lemma:shortest}).
\end{itemize}

We tackle the first step in this subsection and the last three steps in Section~\ref{formalinformal}. %before moving on to the proofs of Lemmas \ref{lemma:homotopy-equivalence}, \ref{lemma:cycle-homotopy} and \ref{lemma:shortest}.
%We begin by defining relevant notions, namely, the shortest cycle basis of a graph, the shortest-path distance metric on the intrinsic \Cech{} complex and the $\delta$-discretization of a graph, before moving on to the proof of Lemmas \ref{lemma:homotopy-equivalence}, \ref{lemma:cycle-homotopy} and \ref{lemma:shortest}.

%\vspace{2mm}
%\noindent\textbf{Shortest Cycle Basis of $G$.}
%Let $G$ be a metric graph of genus $g$ with shortest cycle basis given by $\{c_1,\ldots,c_g\}$, where the length of $c_i$ is $\ell_i$ for $i=1,\ldots,g$, with $\ell_1 \leq \ell_2 \leq \ldots \leq \ell_g$.
%Here, we define the \emph{shortest cycle basis} for $G$ to be the minimal generating set of the $1$-dimensional singular homology\footnote{In our context, we do not differentiate the notations for singular homology and simplicial homology; we work with $1-$dimensional singular homology of $G$, denoted $H_1(G)$ and simplicial homology of $\IC_\delta$, denoted $H_1(\IC_\delta)$.} of $G$, $H_1(G)$, such that if the elements in the set are ordered increasingly by lengths forming a sequence, then it is the smallest sequence in lexicographical order among all generating sets of $H_1(G)$.

\vspace{2mm}
\noindent\textbf{Shortest system of loops of $G$.}
For the graph $G$, we will initially consider its singular homology. In particular,  a singular $1$-simplex of $G$ is a continuous map $\sigma: [0,1]\to G$.
A singular $1$-chain (thus $1$-cycle) is a formal sum of such continuous maps \cite{hatcher2002algebraic}.
%To ease the discussion in this paper, we use the ``geometric representation'' of such 1-cycles.
The objects of interest to us are actually the ``geometric representations'' of such 1-cycles, i.e. their images in $G$.
 In particular, a \emph{loop} $c$ in $G$ is a continuous map $c: \mathbb{S}^1 \to G$; we may also use ``loop'' to refer to the image of this map. (Note that we use the term ``loops'' in this manner for graphs in order to contrast with ``cycles'' in homology.)
For any singular $1$-cycle $\alpha = \sigma_1 + \cdots + \sigma_s$, there is a corresponding loop $c$ whose image in $G$ coincides with the disjoint union of images $\sigma_i([0,1])$, for $i\in [1, s]$, and we refer to $c$ as the \emph{carrier of $\alpha$}.
All singular 1-cycles carried by the same loop are homologous. %Thus we can consider elements of the singular homology $H_1(G)$ to be represented by collections of loops in $G$ (see pg. 109 in \cite{hatcher2002algebraic}).
For a genus $g$ metric graph $G$, a \emph{system of loops of $G$} refers to a set of $g$ loops $\{c_1, \ldots, c_g\}$ such that the homology classes carried by them form a basis for the $1$-dimensional (singular) homology group $H_1(G)$.  For simplicity, we say that these loops are \emph{independent}.
%\footnote{In our context, we do not differentiate the notations for singular homology and simplicial homology; we work with $1-$dimensional singular homology of $G$, denoted $H_1(G)$ and simplicial homology of $\IC_\delta$, denoted $H_1(\IC_\delta)$.} of $G$.
%\yusu{We probably need to be a bit more precise here.}
Given a system of loops of $G$, its \emph{length-sequence} is the sequence of lengths of elements in this set in non-decreasing order.
A system of loops forms a \emph{shortest system of loops of $G$} if its length-sequence is smallest in lexicographical order among all systems of loops of $G$.

From now on, let $\{c^*_1, \ldots, c^*_g\}$ denote a shortest system of loops of $G$ with length-sequence $\ell_1 \le \ell_2 \leq \ldots \leq \ell_g$, where $\ell_i$ is the length of the loop $c_i$ for $i = 1, \ldots, g$.

%\vspace{2mm}
%\noindent\textbf{Shortest system of loops of $G$.}
%A \emph{loop} $c$ in $G$ is a continuous map $c: \mathbb{S}^1 \to G$. In this paper, we also use a loop to refer to the image of this map.
%For a genus $g$ metric graph $G$, a \emph{system of loops of $G$} refers to a set of $g$ loops $\{c_1, \ldots, c_g\}$ whose associated homology classes form a minimal generating set of the $1$-dimensional (singular) homology%\footnote{In our context, we do not differentiate the notations for singular homology and simplicial homology; we work with $1-$dimensional singular homology of $G$, denoted $H_1(G)$, and simplicial homology of $\IC_\delta$, denoted $H_1(\IC_\delta)$.} of $G$.
%\yusu{We probably need to be a bit more precise here. Emilie says: Ellen made it a little more precise, is that ok now?}
%Given a system of loops of $G$, its \emph{length-sequence} is the sequence of lengths of elements in this set in non-decreasing order.
%A system of loops form a \emph{shortest system of loops of $G$} if its length-sequence is smallest in lexicographical order among all systems of loops of $G$.
%
%From now on, let $\{c^*_1, \ldots, c^*_g\}$ denote a shortest system of loops of $G$ with length-sequence $\ell_1 \le \ell_2 \leq \ldots \leq \ell_g$, where $\ell_i$ is the length of the loop $c^*_i$ for $i = 1, \ldots, g$.

\vspace{2mm}
\noindent\textbf{Shortest-path distance metric on $\IC_\delta$.}
To prove Theorem \ref{thm:main}, we will only operate on intrinsic  \v{C}ech complexes. We now define a metric structure $(\IC_\delta^0, \ddelta)$ on the vertex set $\IC_\delta^0$ of the \v{C}ech complex $\IC_\delta$ for sufficiently small $\delta$.

First, we note that there is a natural bijection between points in $G$ and vertices of $\IC_\delta^0$. Specifically, for any point $x\in G$, there is a vertex $\bx$ in the nerve complex $\IC_\delta$ corresponding to the covering element ($\delta$-ball) $B(x, \delta)$.
In what follows, we say that $x$ \emph{generates} the vertex $\bx$ in the \v{C}ech complex $\IC_\delta$.

Recall $d_G$ is the shortest-path distance metric on $G$, where $d_G(x,y)$ is the length of a shortest path between points $x$ and $y$ in $G$.
Given two vertices $\bx, \by \in \IC_\delta^0$, we say that a $1$-chain $\gamma$ in $\IC_\delta^{(1)}$ \emph{connects} $\bx$ with $\by$ if $\gamma$ can be represented as $\gamma = \displaystyle \sum_{i=0}^{N} [\bv_i, \bv_{i+1}]$ such that $\bv_0 = \bx$, $\bv_{N+1} = \by$ and $[\bv_i, \bv_{i+1}]$ are edges in $\IC_\delta^{(1)}$.
For simplicity, we also refer to a 1-(simplicial) chain in $\IC_\delta$ as a \emph{path} in $\IC_\delta$.

%We now define the metric $d_\delta: \IC_\delta^0 \times \IC_\delta^0 \to \mathbb{R}$ as follows.

%We would like to define a metric on the chain complex $\IC_\delta$, so that an elementary $1$-chain (i.e.~an edge) directly inherit the metric from the graph; and the distance between two vertices that are not the endpoints of a single edge is defined as the sum of lengths of edges in a shortest $1$-chain connecting them.
%
%Recall the $1$-skeleton $\IC_\delta^{(1)}$ of $\IC_\delta$ contains infinite number of edges, it can be made into a metric space if it is equipped with a discrete metric $d_{\IC_\delta}$ defined on pairs of vertices $\bar{x}, \bar{y}$ in $\IC_\delta^0$ (where $\bar{x}$ and $\bar{y}$ correspond to the points $x, y$ in $G$ respectively).
%Recall $d_G$ is the shortest-path distance metric on $G$, where $d_G(x,y)$ is the length of a shortest path between points $x$ and $y$ in $G$.

\begin{definition}
\label{definition:d-ic-delta}
We define the metric $\ddelta: \IC_\delta^0 \times \IC_\delta^0 \to \mathbb{R}$ as follows.
Given two vertices $\bx, \by \in \IC_\delta^0$, if $\bx$ and $\by$ are connected by an edge in $\IC_\delta^{(1)}$, then $\ddelta(\bx, \by):=d_G(x,y)$.
Otherwise, we set $\ddelta(\bx, \by):= \inf_{\gamma} ~length(\gamma)$, where $\gamma$ ranges over all 1-chains in $\IC_\delta$ connecting $\bx$ and $\by$, and its length is defined as
$$length(\gamma) = \sum_{[\bv, \bv'] \in \gamma} \ddelta(\bv, \bv'). $$
%
%Otherwise, $\ddelta(\bx, \by)$ is defined to be the length of the shortest $1$-chain among all $1$-chains connecting $\bx$ with $\by$ (where the length of a $1$-chain is defined as the sum of its edge lengths).
%Formally, let any $1$-chain $\gamma$ in $\IC_\delta^{(1)}$ connecting $\bx$ with $\by$ to be represented as $\gamma = \displaystyle \sum_{i=0}^{N} [\bv_i, \bv_{i+1}]$ (for some varying $N$), such that $\bv_0 = \bx$, $\bv_{N+1} = \by$ and $[\bv_i, \bv_{i+1}]$ are edges in $\IC_\delta^{(1)}$, then $d_{\IC_\delta}(\bx, \by): = \inf_{\gamma} \displaystyle \sum_{i=0}^{N} d_{\IC_\delta}(\bv_i, \bv_{i+1}). $
\end{definition}
In other words, an elementary $1$-chain (i.e.~an edge) directly inherits the metric from the graph, and the distance between two arbitrary vertices $\bx, \by$ in $\IC_\delta^0$ is the length of the ``shortest'' $1$-chain among all $1$-chains connecting $\bx$ with $\by$.
However, note that as $\ddelta(\bx, \by)$ is defined to be the infimum of the lengths of paths connecting $\bx$ with $\by$, a priori, it is not clear that it can be realized by a shortest path from $\bx$ to $\by$. We prove the following result, whose proof also implies that there is always a realizing shortest path whose length equals $\ddelta(\bx, \by)$ for any two $\bx, \by \in \IC_\delta^0$.

%\yusu{Can someone make the Claim below to be in the same style as Lemmas/theorems (with numbers)?}
\begin{lemma}\label{claim:equaldist}
For any two vertices $\bx, \by \in \IC_\delta^0$, $\ddelta (\bx, \by) = d_G(x,y)$, and the distance $\ddelta(\bx,\by)$ is realized by a shortest 1-chain connecting $\bx$ to $\by$.
\end{lemma}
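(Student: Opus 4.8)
The plan is to establish the two inequalities $\ddelta(\bx,\by) \le d_G(x,y)$ and $\ddelta(\bx,\by) \ge d_G(x,y)$ separately, and in the course of proving the first one to produce an explicit $1$-chain whose length realizes the distance. Two preliminary observations make the argument run smoothly. First, a characterization of the edges of $\IC_\delta$: the pair $\{\bx,\by\}$ spans an edge of $\IC_\delta$ exactly when $B(x,\delta)\cap B(y,\delta)\neq\emptyset$, and since $G$ is a geodesic space (the infimum defining $d_G$ is attained by an actual shortest path), this happens precisely when $d_G(x,y) < 2\delta$ (for $x\ne y$): if some $z$ lies in both balls then $d_G(x,y)\le d_G(x,z)+d_G(z,y)<2\delta$, and conversely the midpoint of a shortest $x$--$y$ path lies in both balls. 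Second, the quantity $length(\gamma)$ in Definition~\ref{definition:d-ic-delta} is unambiguous: every summand $[\bv,\bv']$ of $\gamma$ is a genuine edge of $\IC_\delta$, so $\ddelta(\bv,\bv')=d_G(v,v')$ by the first clause of the definition, and there is no circularity.

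For the upper bound and the realization, suppose first $d_G(x,y)<2\delta$; then $\bx$ and $\by$ are joined by an edge, $\ddelta(\bx,\by)=d_G(x,y)$ by definition, and the single edge $[\bx,\by]$ is the required shortest $1$-chain. Otherwise set $L=d_G(x,y)$ and let $p\colon[0,L]\to G$ be an arc-length parametrized shortest path from $x$ to $y$. Choose an integer $n$ with $L/n<2\delta$, and for $i=0,\ldots,n$ let $z_i=p(iL/n)$, so $z_0=x$ and $z_n=y$. Since any subarc of a shortest path is again a shortest path, $d_G(z_i,z_{i+1})=L/n<2\delta$, so each $\{\overline{z_i},\overline{z_{i+1}}\}$ is an edge of $\IC_\delta$, and $\gamma=\sum_{i=0}^{n-1}[\overline{z_i},\overline{z_{i+1}}]$ is a $1$-chain connecting $\bx$ to $\by$ with $length(\gamma)=\sum_{i=0}^{n-1} d_G(z_i,z_{i+1})=L$. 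Hence $\ddelta(\bx,\by)\le L=d_G(x,y)$.

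For the lower bound, let $\gamma=\sum_{i=0}^{N}[\bv_i,\bv_{i+1}]$ be any $1$-chain connecting $\bx=\bv_0$ to $\by=\bv_{N+1}$. Using that the summands are edges and then the triangle inequality for $d_G$ on $G$, $length(\gamma)=\sum_{i=0}^{N}\ddelta(\bv_i,\bv_{i+1})=\sum_{i=0}^{N} d_G(v_i,v_{i+1})\ge d_G(v_0,v_{N+1})=d_G(x,y)$; taking the infimum over all such $\gamma$ gives $\ddelta(\bx,\by)\ge d_G(x,y)$. Combining the two bounds yields $\ddelta(\bx,\by)=d_G(x,y)$, and the chain $\gamma$ exhibited above (or the single edge in the short-distance case) has length exactly $d_G(x,y)=\ddelta(\bx,\by)$, so it is a shortest $1$-chain realizing the distance.

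I do not expect a serious obstacle here. The only point that is not purely formal is the claim that a subarc of a shortest path is itself a shortest path, used to get $d_G(z_i,z_{i+1})=L/n$; this is standard for geodesic (length) spaces and, in any event, the inequalities only require $d_G(z_i,z_{i+1})\le L/n$, which is immediate. One should also dispatch the degenerate cases: $x=y$ gives $\ddelta=0=d_G$, and if $x$ and $y$ lie in different connected components both sides are $+\infty$ — so throughout we assume $G$ is connected, which makes $d_G$ finite and the choice of $n$ possible.
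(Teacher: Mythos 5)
Your proof is correct and takes essentially the same approach as the paper's: the lower bound $\ddelta(\bx,\by)\ge d_G(x,y)$ comes from the triangle inequality on $d_G$, and the upper bound (together with the realizing chain) comes from discretizing a shortest $x$--$y$ path in $G$ into short subarcs whose consecutive endpoints span edges of $\IC_\delta$. The paper packages both directions as proofs by contradiction and uses a subdivision with edge lengths at most $\delta$, while you state the inequalities directly and allow lengths up to $2\delta$, but the underlying argument is the same.
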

\begin{proof}
We prove by contradiction.
 (i) First, we assume $\ddelta(\bx, \by) < d_G(x,y)$.
 By definition, there must exist a path $\gamma = \displaystyle \sum_{i=0}^{N} [\bv_i, \bv_{i+1}]$ in $\IC_\delta^{(1)}$ such that $\ddelta(\bx,\by)\le length(\gamma) < d_G(x,y)$.
That is,
$$\displaystyle \sum_{i=0}^{N} d_{\IC_\delta}(\bv_i, \bv_{i+1}) = \displaystyle \sum_{i=0}^{N} d_G(v_i, v_{i+1}) < d_G(x,y).$$
Therefore, the points $v_i$ in $G$ form a path connecting $x$ and $y$ of shorter length than the shortest path between $x, y$ in $G$, a contradiction.

(ii) Now assume $d_{\IC_\delta}(\bx, \by) > d_G(x,y)$. Suppose $\xi$ is a shortest path connecting $x$ and $y$ in $G$. Then, we can consider a discrete version of $\xi$, denoted as $\hat{\xi}$, such that $\hat{\xi}$ contains a finite number of vertices in $\xi$ and each edge is of length at most $\delta$. The vertices and edges in $\hat{\xi}$ give rise to a $1$-chain in
$\IC_\delta^{(1)}$ connecting $\bx$ and $\by$ that, at the same time, is shorter than $\gamma$ (the shortest $1$-chain connecting $\bx$ with $\by$). This is a contradiction.

Putting these two directions together, we have that $\ddelta(\bx, \by) = d_G(x,y)$.
%
%Therefore, $d_{\IC_\delta}(\bx, \by) = d_G(x,y)$.

Finally, note that by part (ii) above, any shortest path $\xi$ between $x$ and $y$ in $G$ gives rise to a $1$-chain $\hat \xi$ whose length is at most $d_G(x,y)$. Since we know that $\ddelta(\bx, \by) = d_G(x,y)$, it follows that $length(\hat \xi) = d_G(x,y)$. Hence there exists a shortest path connecting $\bx$ with $\by$ whose length realizes $\ddelta(\bx,\by)$.
\qed
\end{proof}

% \begin{corollary}
% For any two vertices $\bx, \by \in \IC_\delta^0$, $d_{\IC_\delta}(\bx, \by) = d_G(x,y)$.
% \end{corollary}
% \begin{proof}
% We prove by contradiction.
% First, we assume $d_{\IC_\delta}(\bx, \by) < d_G(x,y)$.
% By definition, if $\gamma = \displaystyle \sum_{i=0}^{N} [\bv_i, \bv_{i+1}]$ is the shortest $1$-chain connecting $\bx$ with $\by$ in $\IC_\delta^{(1)}$, then $d_{\IC_\delta}(\bx, \by) : = \displaystyle \sum_{i=0}^{N} d_{\IC_\delta}(\bv_i, \bv_{i+1}) = \displaystyle \sum_{i=0}^{N} d_G(v_i, v_{i+1})$. Therefore points $v_i$ in $G$ form a path connecting $x$ and $y$ such that its length is shorter than the shortest path between $x, y$ in $G$; creating a contradiction.
% %
%Second, we assume $d_{\IC_\delta}(\bx, \by) > d_G(x,y)$. Suppose $r$ is a shortest path connecting $x$ and $y$ in $G$. Then there exists a $\delta$-discretization of $r$, denoted as $\hat{r}$, such that $\hat{r}$ contains a finite number of vertices in $r$ and has edge lengths at most $\delta$. The vertices and edges in $\hat{r}$ give rise to a $1$-chain in
%$\IC_\delta^{(1)}$ connecting $\bx$ and $\by$, at the same time being shorter than $\gamma$ (the shortest $1$-chain connecting $\bx$ with $\by$). This is another contradiction.
%%
%Therefore, $d_{\IC_\delta}(\bx, \by) = d_G(x,y)$.
%\end{proof}

\vspace{2mm}
\noindent\textbf{Shortest basis of $H_1(\IC_\delta)$.}
The existence of the above metric on the vertex set of $\IC_\delta$ allows one to define the \emph{shortest basis} of $H_1(\IC_\delta)$, which relates the algebraic construction of $\IC_\delta$ to the combinatorial properties of the graph.
%In particular, Lemma~\ref{lemma:cycle-homotopy} (proved below) guarantees that the number of generators of the $H_1(\IC_\delta)$ is the same as the rank of $H_1(\IC_\delta)$,  so in Lemma~\ref{lemma:shortest} (proved below) we claim that we can describe a canonical set of representatives for the homology classes that are realized by the loops of the shortest cycle basis of a graph.

\begin{definition}
Given a homology class $[h] \in H_1(\IC_\delta)$, the \emph{length of $[h]$} is the shortest length of any 1-cycle of $\IC_\delta$ contained in this class. We call the corresponding minimal length cycle $\gamma$ representing $[h]$ the \emph{minimal generating cycle for $[h]$}.

The \emph{shortest homology basis of $H_1(\IC_\delta)$} consists of $\{[\gamma_i]\}_{i=1}^g$ such that the (non-decreasing) length-sequence of the basis elements is lexicographically smallest among that of all bases of $H_1(\IC_\delta)$. The set of corresponding minimal generating cycles $\{ \gamma_1, \ldots, \gamma_g \}$ is referred to as the \emph{shortest system of generating cycles for $H_1(\IC_\delta)$} (or the \emph{shortest basis of $H_1(\IC_\delta)$} for short).
%The shortest basis of homology of $H_1(\IC_\delta)$ consists of $\{[\gamma_i]\}_{i=1}^g$ such that each class representative $\gamma_i$ is length-minimizing in its homology class.
\end{definition}
%\yusu{I don't insist on the definitions above. I am simply defining them for easy use later. If you want to give them alternative names, or to simplify them, please feel free (just make sure that they are consistent later on). }
In Section~\ref{formalinformal}, we establish the correspondence between the graph-theoretic notion of the \emph{shortest system of loops} and the topological object \emph{shortest basis of $H_1(\IC_\delta)$} defined above.

\vspace{2mm}
\noindent\textbf{$\delta$-Discretization of $G$.}
Since we work with simplicial homology for $\IC_\delta$, we introduce a \emph{$\delta$-discretization of $G$},
denoted $\widehat{G}$, and consider the simplicial homology of $\widehat{G}$ and its relation to the simplicial homology of $\IC_\delta$, instead of the singular homology for $G$.
A $\delta$-discretization is a \emph{subdivision} of $G$, to use terminology from graph theory, with additional restrictions.
For each arc $e = [x,y] \in G$ we replace it with a path of edges $[x_0, x_1]+[x_1, x_2]+\cdots+[x_{k-1},x_k]$, where $x = x_0$, and $y = x_k$,
%In particular, we discretize the arcs in $G$
to obtain $\widehat{G}$ so that: (i) all graph nodes in $G$ are nodes in $\widehat{G}$;
(ii) all edges in $\widehat{G}$ are of length at most $\delta$;
and (iii) if $length(e) = \ell$, then $\ds\sum_{i=0}^{k-1} length([x_i, x_{i+1}]) = \ell$.
In Figure \ref{fig:ex_cech}(a), the chosen vertices and edges can be seen as a 1-discretization of $|G|$.
It is easy to see that $\widehat{G}$ forms a triangulation of $G$ ($G$ is the underlying space of $\widehat{G}$) and thus the (simplicial) homology of $\widehat{G}$ is isomorphic to the (singular) homology of $G$.
Furthermore, since no new loops were created, and all original arc lengths were preserved via a path of shorter edges in the subdivision, a shortest system of loops of $G$ induces a shortest system of loops of $\widehat G$ of the same lengths.
Hence from now on, we sometimes abuse the notation slightly and use $\{ c^*_1, \ldots, c_g^*\}$ to refer to a shortest system of loops in both $G$ and $\widehat G$.

\subsection{Relating Graphs to Intrinsic \Cech{} Complexes} \label{formalinformal}
In this subsection, we formalize the relations between metric graphs and their associated intrinsic  \Cech{} complexes. In particular, we provide justification for the intuition that the shortest loops in a graph correspond to the ``smallest'' basis for the first persistent homology, i.e., shortest loops can be thought of as shortest cycles.

\begin{lemma}
\label{lemma:homotopy-equivalence}
Let $\delta <  \frac{\ell_1}{4}$, where $\ell_1$ is the length of the shortest loop in the shortest system of loops for the graph $G$. Then $\IC_\delta$ is homotopy equivalent to $G$.
\end{lemma}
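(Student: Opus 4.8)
The strategy is to invoke the Nerve Lemma for $\IC_\delta$ as the nerve of the cover $U_\delta = \{B(x,\delta) : x \in |G|\}$, and then to argue that the $\delta$-balls (and all their nonempty finite intersections) are contractible when $\delta$ is small relative to the shortest loop length $\ell_1$. If the Nerve Lemma applies, then $\IC_\delta \simeq |G|$, which is exactly the claim. So the real content is verifying the contractibility hypothesis of the Nerve Lemma, and checking that the (infinite, in fact uncountable) cover is still ``good'' in a way that lets a nerve-type theorem go through.

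First I would establish that each ball $B(x,\delta)$ is contractible. Since $\delta < \ell_1/4$, the ball $B(x,\delta)$ has diameter at most $2\delta < \ell_1/2$, so it cannot ``wrap around'' any loop of $G$: any loop in $G$ has length at least $\ell_1$, so traversing it requires a point at distance at least $\ell_1/2$ from $x$ (the farthest point on the loop from $x$ along the loop), which lies outside $B(x,\delta)$. Hence $B(x,\delta)$ is a subtree of $G$ — more precisely, it is a tree-like (simply connected) subset of $|G|$, obtained by taking all points within graph-distance $\delta$ of $x$ — and is therefore contractible by straight-line (geodesic) deformation retraction onto $x$. Next I would check finite intersections: if $\bigcap_{i \in \sigma} B(x_i, \delta) \neq \emptyset$, I claim this intersection is again contractible. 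The key point is that the intersection of $\delta$-balls in $|G|$ is ``geodesically convex'' in the relevant sense — if $p, q$ lie in every $B(x_i,\delta)$, then because $\delta < \ell_1/4$ forces all these balls to be tree-like and pairwise-overlapping inside a region of diameter $< \ell_1$, the unique geodesic from $p$ to $q$ stays within each ball — so the intersection deformation retracts onto any of its points. (The diameter bound rules out the pathology of an intersection being two disjoint arcs on opposite sides of a short loop.)

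With contractibility of all nonempty finite intersections in hand, I would apply a version of the Nerve Theorem valid for open covers of paracompact spaces by sets all of whose nonempty finite intersections are contractible (e.g.\ the statement in Hatcher, or the version for good covers), to conclude $\IC_\delta \simeq |G| = G$. Since $|G|$ is a finite CW complex and the cover, though infinite, is locally finite after passing to a finite subcover cofinal in the combinatorics (a $\delta$-discretization's worth of balls already covers $|G|$ and its nerve is a full subcomplex that is a deformation retract of $\IC_\delta$), the technical hypotheses are met; alternatively one cites the intrinsic \Cech{} nerve theorem of \cite{ChazalSilvaOudot2014} directly.

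The main obstacle I expect is the handling of the uncountable cover: the classical Nerve Lemma is usually stated for finite or at least locally finite good covers, whereas $U_\delta$ is neither. The cleanest fix — and the step I would spend the most care on — is to show that the finite subcomplex of $\IC_\delta$ spanned by the vertices coming from a $\delta$-discretization $\widehat{G}$ is a deformation retract of all of $\IC_\delta$ (every vertex $\bar x$ is connected to a nearby discretization vertex, and the balls overlap enough to collapse), reducing the homotopy computation to a finite good cover of $|G|$ to which the ordinary Nerve Lemma applies, and whose nerve is easily seen to be homotopy equivalent to $|G|$ since each ball is a contractible subtree. The geometric bound $\delta < \ell_1/4$ is precisely what guarantees contractibility of the balls and their intersections, so once that is nailed down the rest is bookkeeping.
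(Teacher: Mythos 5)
Your overall strategy is the same as the paper's: invoke the Nerve Lemma (Hatcher 4G.3) and verify that $U_\delta$ is a good cover, using the bound $\delta < \ell_1/4$ to rule out short loops. However, the central technical step — contractibility of nonempty finite intersections — is asserted rather than proved, and this is precisely where the content of the lemma lies. You claim that ``the unique geodesic from $p$ to $q$ stays within each ball,'' so the intersection is geodesically convex, but you give no argument for why the geodesic cannot leave a ball and re-enter. This is true, but it is not obvious: the triangle inequality alone only gives $d_G(x_i,z) < 2\delta$ for a point $z$ on the geodesic, not $d_G(x_i,z) < \delta$. A correct justification would be to note that the union of the geodesic $\gamma$ from $p$ to $q$ with two shortest paths $x_i\to p$ and $x_i\to q$ has total length $< 4\delta < \ell_1$, so it contains no embedded circle (else we would have a nontrivial loop shorter than the systole); hence its image is a tree, and inside a tree $d(x_i,z) \le \max(d(x_i,p),d(x_i,q)) < \delta$. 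The same short-cycle argument also justifies your unstated assumption that the geodesic is unique. The paper avoids geodesic convexity altogether: it shows by contradiction that an intersection cannot have two components (two distinct short paths between them would splice into a cycle of length $< 4\delta < \ell_1$) and that the single component cannot contain a loop (a loop inside a $\delta$-ball has length $< 2\delta < \ell_1$). Either route works, but yours needs the missing systole argument to be rigorous.

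Your concern about the uncountable cover, and the proposed reduction to a finite subcomplex from a $\delta$-discretization, is overcautious and somewhat circular: Hatcher's nerve lemma applies to arbitrary open covers of paracompact spaces (a compact metric graph certainly qualifies), so no finiteness is required, and proving that the finite nerve is a deformation retract of $\IC_\delta$ is itself a nontrivial claim you would then have to justify. The paper simply cites the nerve lemma directly once goodness of the cover is established. So the gap to close is the geodesic-convexity step; the cover-size workaround should be dropped.
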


\begin{proof}
%\todoEllenInline{Idea: for a small enough $\delta$, each $\delta$-ball is a metric tree. We need to show that a connected subset of a metric tree is simply connected. Also need to consider the intersection of a finite collection of cover elements, not just a pair. Maria will finish and edit this!}
We will use the nerve lemma \cite{hatcher2002algebraic} which states that the nerve of a \emph{good cover} of a space $X$ is homotopy equivalent to the space $X$.
A good cover is one in which the intersection of any finite collection of sets in the cover is either empty or contractible.
We will prove by contradiction that for a sufficiently small $\delta$, the cover $U_\delta$ is a good cover.
We first show that any intersection must contain one connected component, and then show that this component must be contractible.

Assume by way of contradiction, $\hat{U}_{\delta} = \{B(x_1, \delta), \ldots, B(x_k, \delta)\}$ is some finite collection of sets in $U_\delta$ such that $B(x_1, \delta) \cap B(x_2, \delta) \cap \cdots \cap B(x_k, \delta)$ contains at least two connected components.
%In particular, this means that the intersection of any two sets in $U_{\delta_i}$ must contain at least two connected components. \todoEmilie{That's not entirely true.}
%Pick two sets in $U_{\delta_i}$, say $B_\delta(x)$ and $B_\delta(y)$, where $x,y \in |G|$ are the centers of the $\delta$-balls.
%Assume that there are two points in the metric graph, $x, y \in |G|$, such that the intersection of $B_\delta(x)$ and $B_\delta(y)$ consists of at least two connected components.
Let $U$ and $V$ be two of the connected components contained in this intersection, and consider points $p \in U$ and $q \in V$.
Since $p$ and $q$ are in all $B(x_i, \delta)$, there must be a path, $\pi_i$, within $|G|$ from $p$ to $q$ in each $B(x_i, \delta)$.  Notice that we can choose the $\pi_i$ to be simple paths such that the length of each $\pi_i$ is less than $2\delta$.
Additionally, there must be at least two of these paths which are not identical.
If all paths are identically equal then this path would have to be contained in the intersection, contradicting the fact that $p$ and $q$ are in disjoint connected components within the intersection.
Without loss of generality, say $\pi_1$ in $B(x_1, \delta)$ and $\pi_2$ in $B(x_2, \delta)$ are the two distinct paths.
%These paths may not be entirely disjoint %but because $p$ and $q$ are in disjoint connected components and $\pi_1$ and $\pi_2$ are in different balls, their paths cannot be identical.
%In other words, if $\pi_1 = \pi_2$ then the path would be contained in the intersection of the two balls and thus $U$ and $V$ would not be disjoint connected components.

It is possible that these two paths are not entirely disjoint, but because they are not identical, we can find a portion of each which is unique to that path.
Travel along both $\pi_1$ and $\pi_2$ from $p$ until the paths diverge for the first time at point $p' \in |G|$.
Then, continue to travel along both until the paths join back again for the first time at point $q' \in |G|$.
Let $\pi_1'$ be the portion of $\pi_1$ between $p'$ and $q'$, and similarly $\pi_2'$ is the portion of $\pi_2$ between $p'$ and $q'$.
Now, it is clear that $\pi_1'$ and $\pi_2'$ are disjoint except for their endpoints.
See Figure \ref{fig:goodcovercontradiction} for an illustration.
Moreover, the lengths of $\pi_1'$ and $\pi_2'$ must be less than $2\delta$ since they are paths contained within two $\delta$ balls.
Notice that we have created a cycle by taking $\pi_1'$ from $p'$ to $q'$ and then $\pi_2'$ from $q'$ back to $p'$.
This cycle has length $length(\pi_1') + length(\pi_2') < 2\cdot 2\delta < \ell_1$.
But, this is a contradiction because we assumed that $\ell_1$ is the length of the shortest loop in the shortest system of loops of G, and we have discovered a cycle with length less than $\ell_1$.  Thus, for any finite collection of sets $\hat{U}_{\delta}$ in the cover $U_\delta$, the intersection must only contain one connected component.

Now, we must show this component is contractible.  Since our underlying space is a metric graph, notice that a connected component must either be a metric tree, or must contain one or more loops.  A metric tree is simply connected, hence contractible, so there is nothing to prove in this case.  We claim that the other case, where the component contains a loop, cannot happen.  If the component did contain a loop, the loop would then be contained entirely inside a $\delta$-ball, and thus would have length less than $2\delta$.  But, by our choice of $\delta$, this is impossible since $2\delta$ is smaller than the length of the shortest loop.  We can therefore conclude that the component is contractible, and hence, $U_\delta$ is a good cover.  By the nerve lemma, it follows that $\IC_\delta$, the nerve of $U_\delta$, is homotopy equivalent to $G$.
\qed
\end{proof}

\begin{figure}
\begin{center}
\includegraphics[width = 0.65\textwidth]{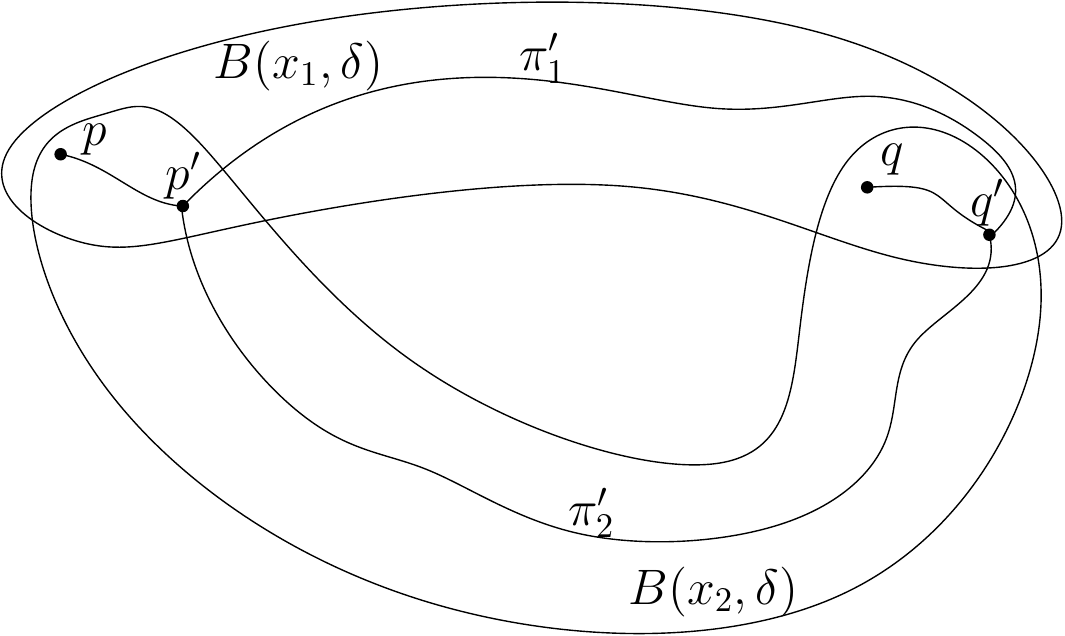}
\caption{An illustration of the points and paths described in Lemma \ref{lemma:homotopy-equivalence}.}
\label{fig:goodcovercontradiction}
\end{center}
\end{figure}

Before we state and prove the next result, note that there is a natural inclusion map $\iota: \widehat{G} \hookrightarrow \IC_{\delta}$. (Note: this map is defined on $\widehat{G}$ rather than $G$, as it would not be continuous otherwise.) Indeed, each vertex $x \in \widehat{G}$, as we described earlier, is identified with a unique vertex $\iota(x):=\bx  \in \IC_\delta^0$ corresponding to the covering element $B(x, \delta)$.
Given an edge $[x,y] \in \widehat{G}$, as $d_G(x,y) \le \delta$ (by construction of the $\delta$-discretization), $B(x, \delta) \cap B(y, \delta) \neq \emptyset$. This means that $[\bx, \by] \in \IC_\delta$.

Recall that $G$ is isomorphic to the underlying space of $\widehat{G}$; for simplicity, we identify $G$ with the underlying space of $\widehat{G}$.
Given any vertex $\bx \in \IC_\delta^0$, the point $x\in G$ that generates it may not be a vertex in $\widehat{G}$, in which case there is a unique edge $\sigma \in \widehat{G}$ whose underlying space $|\sigma| \subset G$ contains $x$.
We say that the edge $\iota(\sigma) \in \IC_\delta^{(1)}$ \emph{covers} $\bx$ (resp. $\sigma$ \emph{covers} $x$) in this case.

Finally, given a 1-chain $\gamma$ of $\IC_\delta$ with only one connected component, we can write it in the form of an ordered sequence of edges: $\gamma = [\bx_1, \bx_2] + [\bx_2, \bx_3] +\cdots + [\bx_{k-1}, \bx_{k}]$, where each $[\bx_i, \bx_{i+1}]$ is an edge (1-simplex) in $\IC_\delta$. Note that it is possible that $\bx_i = \bx_j$ for some $i \neq j$.
To emphasize this representation of the 1-chain, or path, $\gamma$, we also represent it by an ordered sequence of vertices $\langle \bx_1, \bx_2, \ldots, \bx_k\rangle$. A \emph{sub-path} of $\gamma$ is simply the 1-chain represented by a subsequence $\langle \bx_i, \bx_{i+1}, \ldots, \bx_j \rangle$.
For a cycle $\gamma$ satisfying  $\bx_k = \bx_1$, a sub-path will be represented by a subsequence from the cyclic sequence $\langle \bx_1, \bx_2, \ldots, \bx_{k-1}, \bx_1 \rangle$. For example, the subsequence $\langle \bx_{k-2}, \bx_{k-1}, \bx_1, \bx_2, \bx_3 \rangle$ represents a sub-path of the cycle $\gamma$.
%\yusu{OK, I admit this is not the cleanest. Feel free to change it in any way you like. But we have to talk about ``path'' instead of ``1-chain'', as edges in 1-chain do not have order, and for our later argument, we would need to use the order.
%Also, some of the language later probably should adapt to this notation.}

\begin{lemma}
 \label{lemma:cycle-homotopy}
Assume $\delta < \ell_1/4$. The inclusion $\iota: \widehat{G} \hookrightarrow \IC_{\delta}$ induces an isomorphism $\iota_*: H_1(\widehat{G}) \rightarrow H_1(\IC_\delta)$.
\end{lemma}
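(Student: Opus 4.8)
The plan is to reduce the statement to surjectivity of $\iota_*$ by a dimension count, and then prove surjectivity by ``straightening'' $1$-cycles of $\IC_\delta$ along shortest paths of $G$. For the reduction: since $\widehat{G}$ triangulates $G$ we have $H_1(\widehat{G})\cong H_1(G)$, and since $\delta<\ell_1/4$, Lemma~\ref{lemma:homotopy-equivalence} gives $\IC_\delta\simeq G$, hence $H_1(\IC_\delta)\cong H_1(G)$. All three are $\mathbb{Z}_2$-vector spaces of the finite dimension $g$ (the genus of $G$), so it suffices to prove $\iota_*$ is surjective, a surjective linear map between finite-dimensional spaces of equal dimension being an isomorphism.

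The geometric engine is the following ``filling'' observation. If $a,b\in G$ satisfy $d_G(a,b)<2\delta$ and $c$ lies on a shortest path $\sigma$ from $a$ to $b$, then $[\bar a,\bar b]$, $[\bar a,\bar c]$, $[\bar c,\bar b]$ are edges of $\IC_\delta$ and $[\bar a,\bar b,\bar c]$ is a $2$-simplex: the three edges exist because $d_G(a,c),d_G(c,b)\le d_G(a,b)<2\delta$, and the point $w\in\sigma$ at distance $d_G(a,b)/2$ from both $a$ and $b$ satisfies $d_G(a,w)=d_G(b,w)=d_G(a,b)/2<\delta$ and $d_G(c,w)=|d_G(a,c)-d_G(a,b)/2|\le d_G(a,b)/2<\delta$, so $w\in B(a,\delta)\cap B(b,\delta)\cap B(c,\delta)$. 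Over $\mathbb{Z}_2$ this reads $[\bar a,\bar b]+[\bar a,\bar c]+[\bar c,\bar b]=\partial[\bar a,\bar b,\bar c]$, and iterating along any finite ordered sampling of a fixed shortest path shows that an edge $[\bar x,\bar y]$ of $\IC_\delta$ differs, by the boundary of a $2$-chain of $\IC_\delta$, from the edge-path running through that sampling. Because $\delta<\ell_1/4$, every edge of $\widehat{G}$ (of length $\le\delta$) is itself a shortest path in $G$ between its endpoints, so the same statement applies to subdivisions of $\widehat{G}$-edges; consequently, for any $\delta$-discretization $\widehat{G}'$ of $G$ refining $\widehat{G}$, one gets $\mathrm{im}(\iota'_*)=\mathrm{im}(\iota_*)$, where $\iota':\widehat{G}'\hookrightarrow\IC_\delta$ — this is subdivision invariance, packaged for $\IC_\delta$.

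For surjectivity, let $\gamma$ be a $1$-cycle of $\IC_\delta$; using linearity of $\iota_*$ and the decomposition of a $\mathbb{Z}_2$-cycle into closed edge-walks we may assume $\gamma=\langle\bar x_1,\dots,\bar x_k,\bar x_1\rangle$. Each edge forces $d_G(x_i,x_{i+1})<2\delta$; fix a shortest path $\xi_i$ from $x_i$ to $x_{i+1}$ in $G$, and let $\widehat{G}'$ be a $\delta$-discretization refining $\widehat{G}$ whose vertex set contains a sampling $x_i=y^i_0,\dots,y^i_{m_i}=x_{i+1}$ of each $\xi_i$ with consecutive $d_G$-distances $\le\delta$ (possible since $\widehat{G}$ triangulates $G$, so each $y^i_j$ lies on a $\widehat{G}$-edge). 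Applying the filling observation to each $\xi_i$ shows that $\gamma$ and $\hat\gamma:=\sum_i\sum_j[\bar y^i_j,\bar y^i_{j+1}]$ differ by a boundary in $\IC_\delta$, so $[\gamma]=[\hat\gamma]$; moreover $\hat\gamma=\iota'(\hat z')$ for the closed walk $\hat z'=\sum_i\sum_j[y^i_j,y^i_{j+1}]$ in $\widehat{G}'$, whence $[\gamma]=\iota'_*[\hat z']\in\mathrm{im}(\iota'_*)=\mathrm{im}(\iota_*)$. I expect the real work to sit in the filling observation and its bookkeeping: verifying that straightening introduces only boundaries of \emph{genuine} $2$-simplices of $\IC_\delta$ (the midpoint $w$ together with $\delta<\ell_1/4$ is exactly what places these simplices in $\IC_\delta$), and carefully justifying the refinement identity $\mathrm{im}(\iota'_*)=\mathrm{im}(\iota_*)$; everything else is the elementary linear-algebra reduction together with Lemma~\ref{lemma:homotopy-equivalence}.
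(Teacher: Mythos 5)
Your proposal is correct but takes a genuinely different route from the paper's. You replace each edge $[\overline{x}_i,\overline{x}_{i+1}]$ of the cycle $\gamma$ by an edge-path in $\iota'(\widehat{G}')$ sampled along a shortest $G$-path, using your ``filling observation'' (the midpoint $w$ of the shortest path witnesses the $2$-simplex), and then reduce back to $\widehat{G}$ via the subdivision-invariance identity $\mathrm{im}(\iota'_*)=\mathrm{im}(\iota_*)$. The paper instead stays inside the fixed $\widehat{G}$ and traverses $\gamma$ alternating between two sublemmas: one collapses a maximal run of $\gamma$-vertices all covered by a single $\widehat{G}$-edge onto that edge, the other (essentially your filling observation) replaces the connecting $\IC_\delta$-edge by a $\widehat{G}$-path; the residual ``tail'' edges produced by each sublemma then cancel over $\mathbb{Z}_2$. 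Your route avoids the case split and the cancellation bookkeeping, at the cost of introducing $\widehat{G}'$ and the refinement identity, which you correctly observe is itself another instance of the filling observation (applied to each $\widehat{G}$-edge, a shortest $G$-path of length at most $\delta<2\delta$). Two details worth tightening in a full write-up: take the sampling $y^i_0,\dots,y^i_{m_i}$ of $\xi_i$ to be exactly the $\widehat{G}'$-vertices that $\xi_i$ passes through, so that each $[y^i_j,y^i_{j+1}]$ is genuinely a $\widehat{G}'$-edge and $\hat z'$ is genuinely a walk in $\widehat{G}'$; and justify in a sentence that a $\widehat{G}$-edge of length at most $\delta$ is a shortest $G$-path between its endpoints (otherwise it and a shorter path would close up to a nontrivial loop of length $<2\delta<\ell_1$, contradicting the minimality of $\ell_1$ — the same style of argument as in Lemma~\ref{lemma:homotopy-equivalence}).
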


\begin{proof}
For $\delta < \ell_1/4$, we know from Lemma~\ref{lemma:homotopy-equivalence} that $H_1(G)$ and $H_1(\IC_\delta)$ are isomorphic, implying that $\rank(H_1(\widehat{G})) = \rank(H_1(\IC_\delta))$. Thus, if we can show that the mapping $\iota_*: H_1(\widehat{G}) \rightarrow H_1(\IC_\delta)$ is surjective, then the inclusion $\iota$ must induce an isomorphism between these groups.
%That is, for every $ \gamma \in Z_1(\IC_\delta)$, we would like to show that there exists a $\gamma' \in \image(\iota_*)$ such that the homology classes $[\gamma]$ and $[\gamma']$ are equal.
In the following, we prove that any $1$-cycle $\gamma$ in $\IC_\delta$ is homologous to the image of some cycle in $\widehat{G}$ under the inclusion $\iota$.

Without loss of generality, we assume that $\gamma$ has only one connected component (if it has more than one, we apply the same argument to each of its components).
We view $\gamma$ as a path $\gamma = [\bx_1, \bx_2] + [\bx_2, \bx_3] + \ldots  + [\bx_s, \bx_1]$.

In order to prove the lemma, we require the following two technical sublemmas. In them, we take a divide-and-conquer approach by decomposing $1$-chains in $\IC_\delta$ and the images of $1$-chains in $\widehat{G}$ under $\iota$ into pieces, and discuss the relations among these pieces. 
%Since it does not have boundary, we can write it in the following ordered sequence: $\gamma = [\bx_1, \bx_2] + [\bx_2, \bx_3] + [\bx_s, \bx_{s+1}=\bx_1]$, where each $[\bx_i, \bx_{i+1}]$ is an edge (1-cell) in $\IC_\delta$. Note that it is possible that $\bx_i = \bx_j$ for some $i \neq j$.
%In other words, we can consider $\gamma$ to be a path represented by an ordered sequence of edges. A \emph{sub-path} of $\gamma$ refers to a sub-chain of it of the form $[\bx_i, \bx_{i+1}] + [\bx_{i+1}, \bx_{i+2}] + \cdots +[\bx_{j-1}, \bx_j]$ for some $1\le i, j \le s$ -- If $i > j$, then the indices of vertices of this sub-chain is the subsequence $\langle i, i+1, \ldots, s, 1, 2, \ldots, j \rangle$ of the cyclic sequence $\langle 1, \ldots, s, 1 (=s+1) \rangle$.

\vspace{2mm}
\noindent\textbf{Sublemma 1}: \emph{Given an edge $\alpha' = [\by, \by']$ in $\iota(\widehat G)$, where
%consider a sub-chain $\alpha$ in $\gamma$ that is the maximal $1$-chain whose vertices are contained in $\alpha'$. Suppose $\alpha$ is represented in the form
$$\alpha = [\bx_j, \bx_{j+1}] + [\bx_{j+1}, \bx_{j+2}] + \cdots + [\bx_{k-1}, \bx_k]$$ %\todoEmilie{Do we need $[x_{j+1}, x_{j+2}]$ to be in $\alpha$?}
is a path in $\IC_\delta$ such that $\alpha'$ covers $\bx_j, \bx_{j+1}, \ldots , \bx_k$, we have
\begin{equation}
\label{eq:remark2}
\alpha' - \alpha  = [\bx_j, \by] + [\bx_k, \by'] + \bdr [\bx_k,\by, \by']
+ \bdr \sum_{i=j}^{k-1} [\bx_i, \bx_{i+1}, \by].
\end{equation}
%$\alpha'$ is homologous to
%$\alpha+ [\bx_j, \by] + [\bx_k, \by'] + \bdr [\bx_k,\by, \by']
%+ \bdr \sum_{i=j}^{k-1} [\bx_i, \bx_{i+1}, \by]$,
%\begin{equation}
%\label{eq:remark2}
%\alpha' - \alpha + [\bx_j, \by] + [\bx_k, \by'] = \bdr [\bx_k,\by, \by']
%+ \bdr \sum_{i=j}^{k-1} [\bx_i, \bx_{i+1}, \by].
%\end{equation}
%we have
%\begin{equation}
%\label{eq:remark2}
%\alpha' - \alpha + [\bx_j, \by] + [\bx_k, \by'] = \bdr [\bx_k,\by, \by']
%+ \bdr \sum_{i=j}^{k-1} [\bx_i, \bx_{i+1}, \by].
%\end{equation}
The triangles on the right hand side of \eqref{eq:remark2} belong to $\IC_\delta$, so that $\alpha' $ is homologous to $\alpha + [\bx_j, \by] + [\bx_k, \by']$.
%and $\alpha' - \alpha$ bounds a 2-chain in $\IC_\delta$, and thus, $\alpha$ and $\alpha'$ are homologous.
}

%\todoEmilie{Check my rewording of this remark and the equation. Note that I moved two 1-chians to the LHS since $\bdr [\bx_k, \by, \by']$ contains $[\bx_k, \by']$ and $[\bx_j, \by]$ is in the boundary of one of the things in the sum (the first one).}

%\yusu{Bei, we should probably change $x_1$ in your figure to be $x_j$.}
\begin{figure}[!ht]
\begin{center}
\includegraphics[width = 0.7\textwidth]{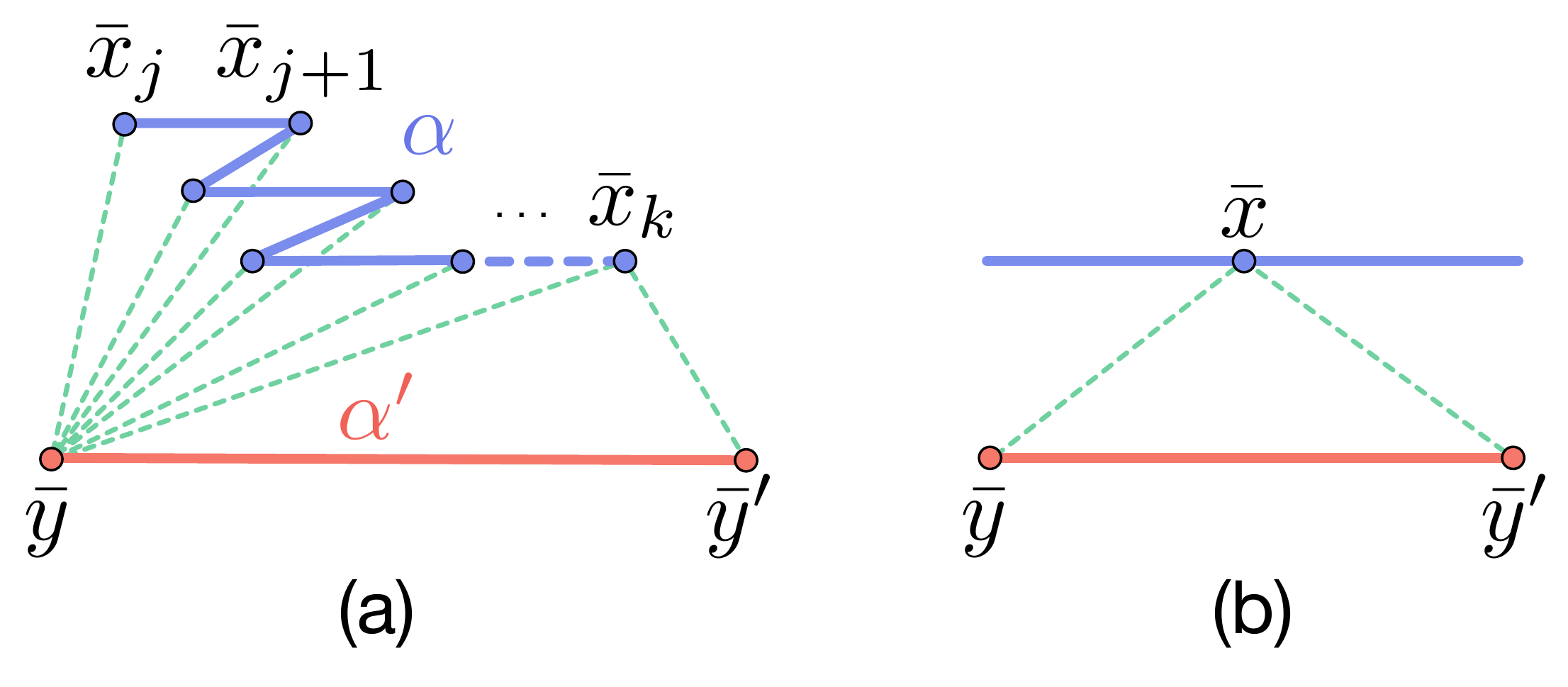}
\caption{Illustration for Sublemma 1: the (a) general and (b) degenerate cases, respectively. The blue edges and points are in $\IC_\delta$ but not $\widehat{G}$, and the red are in both.}
\label{fig:case-a}
\end{center}
\end{figure}

\begin{proof}[Sublemma 1]
See the illustration in Figure~\ref{fig:case-a}.
Since each $x_i$ is covered by the edge $[y, y'] \in \widehat{G}$, it follows that $d_G(x_i, y), d_G(x_i, y'), d_G(x_i, x_{i+1}) \le \delta$ for any $i \in [j, k]$. It then follows that $B(x_k, \delta) \cap B(y, \delta) \cap B(y', \delta) \neq \emptyset$ and $B(x_i,\delta)\cap B(x_{i+1}, \delta) \cap B(y, \delta) \neq \emptyset$. Hence the triangles $[\bx_k,\by, \by']$ and $[\bx_i, \bx_{i+1}, \by]$ all belong to $\IC_\delta$. Note that this holds for the degenerate case as well, where $\alpha'$ is homologous to $[\bx, \by] + [\bx, \by']$.
%As illustrated in Figure~\ref{fig:case-a}, the triangles $[\bx_k,\by, \by']$ and $[\bx_i, \bx_{i+1}, \by]$ all belong to $\IC_\delta$ because they have $\delta$-centers $\bx_k$ and $\bx_i$ respectively. The argument holds for the degenerate case as well, where $\alpha'$ is homologous to $[\bx, \by] + [\bx, \by']$. The rest of the proof follows simple algebra.
\qed
\end{proof}

\vspace{2mm}
\noindent\textbf{Sublemma 2}: \emph{
Given an elementary $1$-chain $\alpha = [\bx, \bx']$ in $\IC_\delta$,
consider the shortest path $\xi$ in $G$ connecting $x$ and $x'$. Assume that $\xi$ contains a non-empty set of vertices $y_1, y_2, \cdots, y_l$ in $\widehat{G}$, ordered by their positions in $\xi$.
%Consider the shortest path $r$ in $\widehat{G}$ connecting $x$ and $x'$, passing through multiple vertices $y_1, y_2, \cdots, y_l$ in $\widehat{G}$.
Consider a $1$-chain $\alpha'$ in $\IC_\delta$ of the form
$$\alpha' = [\by_1, \by_2] + [\by_2, \by_3] + \cdots + [\by_{l-1}, \by_l].$$
%that is a maximal $1$-chain whose vertices are in $\alpha$, then we have
Obviously, $\alpha'$ is contained in the image $\iota(\widehat{G})$ of $\widehat G$ in $\IC_\delta$, as each $[y_i, y_{i+1}] \in \widehat G$ for $i\in [1, l]$. Then % We then have that
%\begin{equation}
%\label{eq:remark3}
%\alpha' - \alpha  + [\bx, \by_1] + [\bx', \by_l] = \bdr [\by_l,\bx, \bx']
%+ \bdr \sum_{i=1}^{k-1} [\by_i, \by_{i+1}, \bx].
%\end{equation}
\begin{equation}
\label{eq:remark3}
\alpha - \alpha'  = [\bx, \by_1] + [\bx', \by_l] + \bdr [\by_l,\bx, \bx']
+ \bdr \sum_{i=1}^{k-1} [\by_i, \by_{i+1}, \bx],
\end{equation}
where all the triangles on the right hand side belong to $\IC_\delta$. This implies that $\alpha' - \alpha$ is homologous to $[\bx, \by_1] + [\bx', \by_l]$, and thus $\alpha$  is homologous to $\alpha'+ [\bx, \by_1] + [\bx', \by_l]$.
}%\todoEmilie{Check this too}

\begin{figure}[!ht]
\begin{center}
\includegraphics[width = 0.7\textwidth]{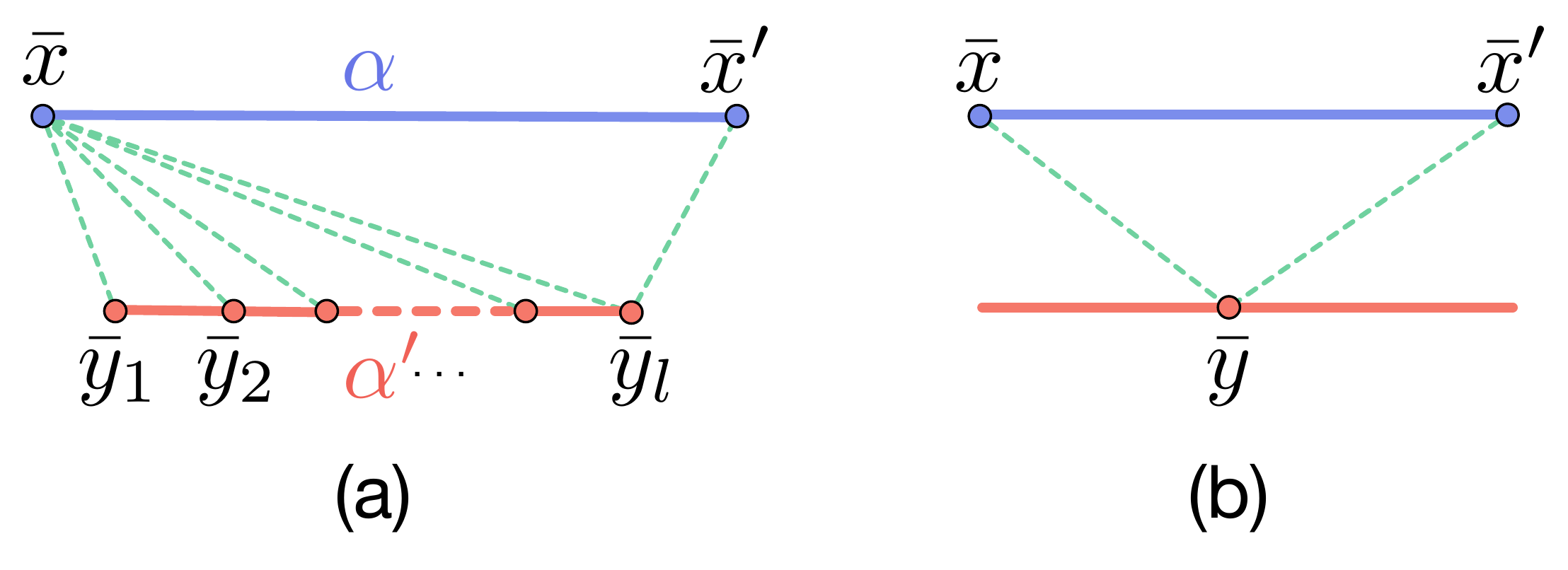}
\caption{Illustration of the (a) general and (b) degenerate cases in Sublemma 2. As before, the blue edges and points are in $\IC_\delta$ but not $\widehat{G}$, and the red are in both.}
\label{fig:case-b}
\end{center}
\end{figure}

\begin{proof}[Sublemma 2]
See the  illustration in Figure~\ref{fig:case-b}. Note that since $[\bx, \bx']$ is an edge in $\IC_\delta$, $length(\xi) = d_G(x, x') \le 2\delta$. Let $z$ be the mid-point of $\xi$; obviously, we know $d_G(x, z), d_G(x', z) \le \delta$.
Since each $y_i \in \xi$, we have $d_G(y_i, z) \le \delta$ as well for $i\in [1, l]$. It then follows that $z \in B(x, \delta) \cap B(y_i, \delta) \cap B(x', \delta) \neq \emptyset$ and $z \in B(y_i, \delta) \cap B(y_{i+1}, \delta) \cap B(x, \delta) \neq \emptyset$.
Hence  the triangles $[\by_l,\bx, \bx']$ and $[\by_i, \by_{i+1}, \bx]$ all belong to $\IC_\delta$.
Note that the argument holds for the degenerate case shown in Figure~\ref{fig:case-b} (b) in which case we assume that $[\by, \by]$ is a degenerate 1-chain to simplify the presentation of the argument.
%As illustrated in s~\ref{fig:case-b}, the triangles $[\by_l,\bx, \bx']$ and $[\by_i, \by_{i+1}, \bx]$ all belong to $\IC_\delta$ because they have $\delta$-centers $\by_l$ and $\by_i$ respectively.
%The argument holds for the degenerate case as well, where $\alpha$ is homologous to $[\bx, \by] + [\bx', \by]$.
%The rest of the proof follows simple algebra.
\qed
\end{proof}

\begin{figure}[!ht]
\begin{center}
\includegraphics[width = 0.7\textwidth]{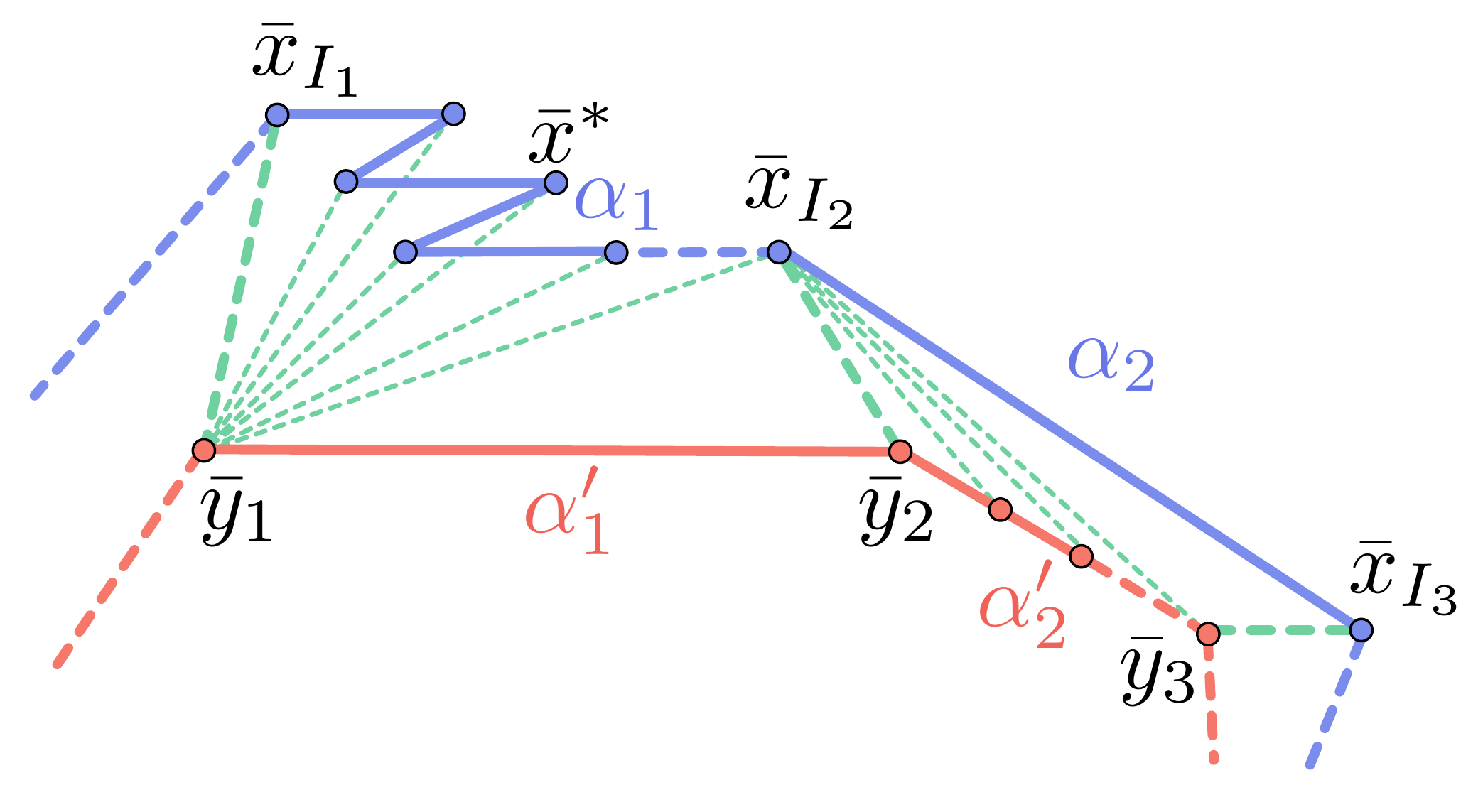}
\caption{Illustration for Lemma~\ref{lemma:cycle-homotopy}.}
\label{fig:case-c}
\end{center}
\end{figure}

We now prove our Lemma~\ref{lemma:cycle-homotopy} using the previous two sublemmas.
The proof is illustrated in Figure~\ref{fig:case-c}.
Given the 1-cycle $\gamma = [\bx_1,\bx_2] + [\bx_2+\bx_3] + \cdots + [\bx_s, \bx_1]$ in $\IC_\delta$ as described earlier, we start with a vertex $\bx^*$ in $\gamma$ (say $\bx^* = \bx_1$).
Without loss of generality, suppose $\bx^*$ is contained in some edge $\alpha_1' = [\by_{1}, \by_{2}] \in \iota(\widehat G)$. We extend in the forward and backward direction along $\gamma$ to find the maximum sub-path $\alpha_{1}$ of $\gamma$ whose vertices are contained in the edge $\alpha_{1}'$. (This is the situation described in Sublemma 1.)
Denote the starting vertex of $\alpha_{1}$ as $\bx_{I_1}$ and the ending vertex as $\bx_{I_2}$.
Now consider the next edge $\alpha_{2} = [\bx_{I_2}, \bx_{I_3}]$ in $\gamma$ where $I_3 \equiv I_{2 + 1 \bmod s}$.
By construction of $\alpha_1$, we know that $x_{I_2}$ and $x_{I_3}$ are necessarily covered by different edges in $\widehat G$. We then construct a path $\alpha_{2}' \subset \iota(\widehat{G})$ by the procedure described in Sublemma 2. Suppose $\alpha_2'$ is represented by the ordered sequence of vertices $\by'_1, \ldots, \by'_l$. Note that it is necessary that $\by'_1 = \by_2$ (which is one of the endpoints of edge $\alpha_1'$).

%We now prove our Lemma~\ref{lemma:cycle-homotopy} using the previous two remarks.
%The proof procedure is illustrated in Figure~\ref{fig:case-c}.
%Suppose $\gamma$ is any $1$-cycle in $\IC_\delta$ whose edges have length at most $\delta$, we start with a vertex $\bx^*$ in $\gamma$;
%w.l.o.g., suppose $\bx^*$ is contained in some edge $\alpha_i' = [\by_i, \by_{i+1}]$ (that is induced by vertices in $\widehat{G}$), we extend in the forward and backward direction along $\gamma$ to find the maxima $1$-chain $\alpha_i$ whose vertices are contained in the interior of the edge $\alpha_i'$, this is the situation in Remark 2.
%Denote the starting vertex of $\alpha_i$ as $\bx_i$; and the end vertex as $\bx_{i+1}$.
%Now start with $\bx_{i+1}$, we extend in the forward direction along $\gamma$ and traverse the next edge $\alpha_{i+1} = [\bx_{i+1}, \bx_{i+2}]$ in $\gamma$.
%Consider a shortest path in $\widehat{G}$ connecting the pair of points $x_{i+1}$ and $x_{i+2}$, passing through vertices in $\widehat{G}$, the images of these vertices under $\iota$ form a maximal $1$-chain $\alpha'_{i+1}$ whose vertices are contained in $\alpha_{i+1}$, this is the situation in Remark 3.

We then repeat this process. As we traverse $\gamma$, we alternate between the situations in Sublemma 1 and Sublemma 2.
Therefore, the $1$-cycle $\gamma$ can be represented as a linear combination of $1$-chains
$$\gamma = \sum_{i}^k \alpha_i. $$
Set
%and similarly we can construct a $1$-cycle $\gamma'$ which is the image of some cycle in $\widehat{G}$,
$\gamma' =\ds \sum_{i} \alpha_i'.$ We argue that $\gamma'$ is a 1-cycle. Indeed, by construction, the  last vertex in the path representation of $\alpha_i'$ is necessarily the same as the first vertex in the path representation of $\alpha_{i+1~mod~k}$. It then follows that
%Furthermore, combining claims in Remark 1 and 2, we see that
\begin{equation}
\label{eq:lemma2}
\gamma' - \gamma  = \sum_{i} (\alpha'_i - \alpha_i).
\end{equation}

Now, combining the claims in Sublemmas 1 and 2, we see that all of the single edges when moved to the right-hand sides of equations~\eqref{eq:remark2} and ~\eqref{eq:remark3} are cancelled out (under $\mathbb{Z}_2$ coefficients) in equation~\eqref{eq:lemma2}, leaving only triangles.
This is illustrated in Figure~\ref{fig:case-c}, where all edges $[\bx_i, \by_i]$  cancel while the triangles remain. All of the remaining triangles belong to $\IC_\delta$.
%\yusu{We should probably change the indices in  Figure~\ref{fig:case-c} to be $I_i, I_{i+1}$ etc. }
Hence $\gamma' - \gamma$ bounds a 2-chain, and  thus $\gamma$ and $\gamma'$ are homologous.
\qed
\end{proof}

\begin{lemma}\label{lemma:shortest}
Let $\{[\gamma_i]\}_{i=1}^g$ be a shortest homology basis for $H_1(\IC_\delta)$ with $\{ \gamma_i \}_{i=1}^g$ being the corresponding shortest system of generating cycles, sorted in non-decreasing order of their lengths.
Then, its length-sequence equals that of the shortest system of loops of $G$.
\end{lemma}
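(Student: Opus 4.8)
The plan is to show that the two length-sequences coincide by proving two inequalities: that the shortest homology basis of $H_1(\IC_\delta)$ is no longer than the shortest system of loops of $\widehat G$, and conversely. The key tool is Lemma~\ref{lemma:cycle-homotopy}, which gives an isomorphism $\iota_*: H_1(\widehat G) \to H_1(\IC_\delta)$, together with Lemma~\ref{claim:equaldist}, which identifies the metric $\ddelta$ on $\IC_\delta^0$ with $d_G$. First I would fix a shortest system of loops $\{c^*_1, \ldots, c^*_g\}$ of $\widehat G$ with length-sequence $\ell_1 \le \cdots \le \ell_g$. Each $c^*_i$ is a cycle in $\widehat G$ (a subgraph, hence a $1$-chain), so $\iota(c^*_i)$ is a $1$-cycle in $\IC_\delta$ of the same length (each edge $[x,y]$ of $\widehat G$ maps to the edge $[\bx, \by]$ of $\IC_\delta$, which by Lemma~\ref{claim:equaldist} has length $\ddelta(\bx,\by) = d_G(x,y)$). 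Since $\iota_*$ is an isomorphism, $\{[\iota(c^*_i)]\}_{i=1}^g$ is a basis of $H_1(\IC_\delta)$, so the length of each homology class $[\iota(c^*_i)]$ is at most $\ell_i$. By the lexicographic-minimality definition of the shortest homology basis, its length-sequence is $\le$ (lexicographically) the sequence obtained from $\{[\iota(c^*_i)]\}$ sorted in non-decreasing order, which is at most $(\ell_1, \ldots, \ell_g)$ entrywise; hence the shortest homology basis length-sequence is $\le (\ell_1,\ldots,\ell_g)$ lexicographically.

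For the reverse inequality I would start with a shortest homology basis $\{[\gamma_i]\}_{i=1}^g$ of $H_1(\IC_\delta)$ with minimal generating cycles $\gamma_i$ of lengths $m_1 \le \cdots \le m_g$. Applying Lemma~\ref{lemma:cycle-homotopy}, each $[\gamma_i]$ is the image under $\iota_*$ of a unique class in $H_1(\widehat G)$; the content of the proof of that lemma is that $\gamma_i$ is homologous in $\IC_\delta$ to $\iota(\hat\gamma_i)$ for an explicit cycle $\hat\gamma_i$ in $\widehat G$, and moreover — this is the crucial quantitative point — the construction in Sublemmas~1 and~2 does not increase length: the path $\alpha'_i$ produced in Sublemma~2 runs along a shortest path of $G$ between the endpoints covered, and in Sublemma~1 the replacement edge $\alpha'_i = [\by,\by']$ has length $\ddelta(\by,\by') = d_G(y,y')$ which is at most the length of the sub-path $\alpha_i$ it replaces (both $\by, \by'$ are "close" to all the $\bx_i$, and $d_G(y,y')$ is bounded by the length of $\alpha_i$ since $\alpha_i$ traces a path in $G$ from $y$'s cover-point region to $y'$'s). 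So $\mathrm{Len}(\iota(\hat\gamma_i)) \le \mathrm{Len}(\gamma_i) = m_i$, and since $\iota$ preserves lengths, $\mathrm{Len}(\hat\gamma_i) \le m_i$. The classes $\{[\hat\gamma_i]\}$ form a basis of $H_1(\widehat G)$ because $\iota_*$ is an isomorphism, so the shortest system of loops of $\widehat G$ has length-sequence lexicographically $\le (m_1,\ldots,m_g)$. Finally recall that a shortest system of loops of $G$ and of $\widehat G$ have the same length-sequence, so combining the two directions forces equality of both length-sequences.

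The main obstacle I anticipate is making the length bound in the reverse direction fully rigorous: the proof of Lemma~\ref{lemma:cycle-homotopy} as written is purely homological and does not explicitly track lengths, so I would need to re-examine the constructions of $\alpha'_i$ in Sublemmas~1 and~2 and verify carefully that $\mathrm{Len}(\gamma') \le \mathrm{Len}(\gamma)$, where $\gamma' = \sum_i \alpha'_i$. In Sublemma~2 this is essentially immediate because $\alpha'_i$ is built from shortest-path data, but in Sublemma~1 one must argue that collapsing the sub-path $\alpha_i = \langle \bx_j, \ldots, \bx_k\rangle$ (all covered by the single $\widehat G$-edge $[\by,\by']$) onto a single edge does not increase length — i.e. $\ddelta(\by,\by')$ compared to $\sum \ddelta(\bx_i,\bx_{i+1})$. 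One also has to handle the "boundary" edges $[\bx_j,\by]$ and $[\bx_k,\by']$ that appear when stitching consecutive pieces together and confirm (as the proof of Lemma~\ref{lemma:cycle-homotopy} already does combinatorially under $\mathbb{Z}_2$) that in the cyclic sum these contributions telescope, so that $\gamma'$ is a genuine cycle of controlled length and not merely a chain. A secondary subtlety is the edge case $g = 0$, which is vacuous, and ensuring the argument is insensitive to the choice of representatives $\hat\gamma_i$.
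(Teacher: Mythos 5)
Your first direction matches the paper's exactly, and you correctly sense that the danger lies in the second direction. But the fix you sketch --- running the Sublemma~1/Sublemma~2 machinery from Lemma~\ref{lemma:cycle-homotopy} and tracking lengths --- does not go through, and the paper takes a different route precisely to avoid this.

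The key false step is the claim that the Sublemma~1 replacement is length-nonincreasing. When a maximal sub-path $\alpha_1 = \langle \bx_{I_1}, \ldots, \bx_{I_2}\rangle$ of $\gamma_i$ has all its generating points covered by a single $\widehat G$-edge $[y,y']$, and you replace it with the edge $[\by,\by']$, there is no reason that $d_G(y,y') \le \mathrm{length}(\alpha_1)$. The generating points $x_{I_1},\ldots,x_{I_2}$ merely lie somewhere on the arc from $y$ to $y'$; they need not span it. For instance, they could all sit near the midpoint of a $\widehat G$-edge of length $\delta$, making $\mathrm{length}(\alpha_1)$ arbitrarily small while $d_G(y,y') = \delta$. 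Your parenthetical justification ("$\alpha_i$ traces a path in $G$ from $y$'s cover-point region to $y'$'s") is not a consequence of maximality: maximality only says $x_{I_1-1}$ and $x_{I_2+1}$ lie off the edge $[y,y']$, not that $x_{I_1}$ is near $y$ or $x_{I_2}$ near $y'$. Moreover, the stitching edges $[\bx_{I_1},\by]$ and $[\bx_{I_2},\by']$ telescope only as $\mathbb{Z}_2$-chains in equation~\eqref{eq:lemma2}; lengths are not algebraic and do not cancel, so you cannot appeal to telescoping to salvage the bound.

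The paper avoids all of this by never using Sublemma~1 in this part of the argument. Instead it first \emph{refines} $\widehat G$ to a finer $\delta$-discretization $\newGhat$ that includes every generating point $x_j$ occurring in any $\gamma_i$ as a vertex. (Lemma~\ref{lemma:cycle-homotopy} still applies to $\newGhat$, since $\newGhat$ is still a $\delta$-discretization.) Then each edge $[\bx_j,\bx_{j+1}]$ of $\gamma_i$ is replaced directly by a shortest $1$-chain $\pi_j$ in $\newGhat$ from $x_j$ to $x_{j+1}$, whose length is exactly $d_G(x_j,x_{j+1}) = \ddelta(\bx_j,\bx_{j+1})$ by Lemma~\ref{claim:equaldist}, so the resulting cycle $\xi_i$ satisfies $\mathrm{length}(\xi_i) = \mathrm{length}(\gamma_i)$ on the nose. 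The homology identification $[\iota(\xi_i)] = [\gamma_i]$ is then established by a fresh, local homotopy argument (triangles $[\bx_j,\by_a,\by_{a+1}]$ in $\IC_\delta$), not by re-running Sublemmas~1 and~2. The refinement step is the missing idea in your proposal; without it, some $x_j$ need not even be a vertex of $\widehat G$, and the length bookkeeping fails as above.
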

%\begin{lemma}\label{lemma:shortest}
%The shortest basis $\{[\gamma_i]\}_{i=1}^g$ of homology of $H_1(\IC_\delta)$ is determined by  the shortest cycle basis  $\{c_i\}_{i=1}^g$ of graph the underlying $G.$ In particular $\gamma_I$ is a $1$-cycle in $\IC_\delta$  corresponding to cycles $c_i \in G$ and therefore its length is equal to $\ell_i,$  for $i=1,\ldots,g$.
%\end{lemma}
\begin{proof}
As before, let $\{c_1^*, \ldots, c_g^*\}$ be a shortest system of loops of $\widehat{G}$ with length sequence $\ell_1 \le \ell_2 \le \ldots \le \ell_g$.
Note that this is the same length-sequence of a shortest system of loops for $G$, as $\widehat G$ is a triangulation of $G$.

First, note that by Lemma \ref{lemma:cycle-homotopy}, $\{[\iota(c^*_i)] \}_{i=1}^g$ form a basis for $H_1(\IC_\delta)$. By applying Lemma~\ref{claim:equaldist} to each edge in $c^*_i$, we see that $length(\iota(c^*_i)) = length(c^*_i)$, and thus we know that the length-sequence induced by $\{[\gamma_i]\}_{i=1}^g$ must be lexicographically smaller than or equal to $\{ \ell_1, \ell_2, \ldots, \ell_g\}$.

Next, we prove the other direction. Specifically, set $\ell_i' = length(\gamma_i)$ for $i\in [1, g]$. We will prove by contradiction that the length-sequence $\{ \ell_1', \ell_2', \ldots, \ell_g' \}$ is lexicographically larger than or equal to $\{ \ell_1, \ell_2, \ldots, \ell_g\}$.
Assume otherwise; that is, $\{ \ell_1', \ell_2', \ldots, \ell_g' \}$ is lexicographically smaller than $\{ \ell_1, \ell_2, \ldots, \ell_g\}$.

Let $\bar{V}_\Gamma$ denote the set of vertices from all of the $\gamma_i$, and let $V_\Gamma = \{ v \in G \mid \bv \in \bar{V}_\Gamma\}$ be the set of corresponding generating points in $G$.
This set is finite since each 1-chain $\gamma_i$ is finite by definition, and there are finitely many generators.
We first refine $\widehat{G}$ to $\newGhat$ by subdividing edges of $\widehat G$ so that all points in $V_\Gamma$ are now also vertices in $\newGhat$. Obviously, $\newGhat$ is also a $\delta$-discretization of $G$, and thus Lemma \ref{lemma:cycle-homotopy} holds for it as well.

For each $i\in [1,g]$, suppose the 1-cycle $\gamma_i$ has the form $\ds\sum_{j=1}^k [\bx_j, \bx_{j+1}]$ with $\bx_{k+1}=\bx_1$. It is easy to see that $\gamma_i$ has only one component. If it has more than one, then there exists at least one component whose corresponding homology class is independent of $[\gamma_1], \ldots, [\gamma_{i-1}]$. We can set $\gamma_i$ to be this component (which is a 1-cycle itself) and obtain a shorter length-sequence, which contradicts the assumption that $\{[\gamma_i]\}$ is a shortest homology basis for $H_1(\IC_\delta)$.

We now construct a 1-cycle $\xi_i$ from $\newGhat$ satisfying the following conditions: (C-1) $length(\xi_i) = length(\gamma_i)$; and (C-2) $[\iota(\xi_i)] = [\gamma_i]$ where $\iota: \newGhat \hookrightarrow \IC_\delta$ is the inclusion map.
First, note that by construction of $\newGhat$, for each $\bx_j$, there is a vertex $x_j$ from $\newGhat$ such that $\iota(x_j) = \bx_j$.
For each $[\bx_j, \bx_{j+1}]$, consider the shortest path (1-chain) $\pi_j$ connecting $x_j$ to $x_{j+1}$ in $\newGhat$. We have that
$$length(\pi_j) = d_G(x_j, x_{j+1}) = \ddelta(\bx_j, \bx_{j+1}) = length([\bx_j, \bx_{j+1}].$$
Concatenating all such shortest chains gives rise to a 1-cycle $\xi_i$ in $\newGhat$ whose length equals $\ds\sum_{j=1}^k length([\bx_j, \bx_{j+1}]) = length(\gamma_i)$.
Hence, condition (C-1) above is satisfied.

Furthermore, recall the 1-chain $\pi_j$ in $\newGhat$ corresponding to edge $[\bx_j, \bx_{j+1}]$ constructed above. Suppose $\pi_j$ is represented by the ordered sequence of vertices $\langle x_j = y_1, y_2, \ldots, y_s = x_{j+1}\rangle$. We claim that the 1-chain $\iota(\pi_j)$ is homotopic to the edge $[\bx_j, \bx_{j+1}]$ in $\IC_\delta$. Indeed, all vertices from $\pi_j$ are contained in a path of length $d_G(x_j, x_{j+1}) \le 2\delta$ (as $[\bx_j, \bx_{j+1}]$ is an edge in $\IC_\delta$). It then follows that $B(x_j, \delta) \cap B(y_a, \delta) \cap B(y_{a+1}, \delta) \neq \emptyset$ for any $a\in [1, s-1]$ (the common intersection contains, say, the mid-point of path $\pi_j$). Hence, triangles of the form $[\bx_j, \by_a, \by_{a+1}]$, for $a\in [1,s-1]$, exist in $\IC_\delta$, establishing a homotopy between $\iota(\pi_j) = \langle \bx_j, \by_2, \ldots, \by_s = \bx_{i+1}\rangle$ and the edge $[\bx_j, \bx_{j+1}]$ in $\IC_\delta$. All of these homotopies together, for all $i\in [1,k]$, provide a homotopy between the 1-cycle $\iota(\xi_i)$ and $\gamma_i$. Thus, $[\iota(\xi_i)] = [\gamma_i]$ and condition (C-2) above also holds.

Hence, from $\{ \gamma_i \}_{i=1}^g$, we can obtain a set of 1-cycles $\{ \xi_i \}_{i=1}^g$ of $\tilde G$ whose length-sequence equals $\{ \ell'_1, \ldots, \ell'_g\}$.
Furthermore, by Lemma \ref{lemma:cycle-homotopy} and condition (C-2) above, $\{ [\xi_i] \}_{i=1}^g$ must form a basis for $H_1(\newGhat)$ as $\{ [\iota(\xi_i)] \}_{i=1}^g = \{ [\iota(\xi_i)] \}_{i=1}^g$ form a basis for $H_1(\IC_\delta)$.
It then follows that we have obtained a system of loops $\{ \xi_i \}_{i=1}^g$ for $\newGhat$ whose length-sequence is smaller than $\{ \ell_1, \ldots, \ell_k \}$, which contradicts the fact that the latter is the shortest length-sequence possible.
Hence, our assumption is wrong, and the length-sequence for $\{ \gamma_i \}_{i=1}^g$ cannot be smaller than $\{ \ell_1, \ldots, \ell_k \}$.

Therefore, putting the proofs of both directions together, we have that the length-sequence for the shortest homology basis $\{ [\gamma_i] \}_{i=1}^g$ must be equal to that of the shortest system of loops for $\newGhat$ and thus, for $G$.
\qed
\end{proof}

%\textit{\color{blue} RS changes end}

% such that for each
%$\gamma \in H_1(\IC_\delta)$, there exists $\gamma' \in \image(\iota_*)$ such that $[\gamma] = [\gamma']$.

%{\remarkinline{Do we assume $\gamma$ is finite?}}
%Finally by Lemma~\ref{homotopyEquivalence}, $H_1(\widehat{G}) \cong H_1(\IC_\delta)$.

% such that for each
%$\gamma \in H_1(\IC_\delta)$, there exists $\gamma' \in \image(\iota_*)$ such that $[\gamma] = [\gamma']$.

%\todoEllenInline{Idea: Take any cycle $\gamma$ from $C_{\delta}$. We want to show that there exists $\gamma'$ in $\widehat{G}$ that is homologous to $\gamma$. That is, we want to show for every $\gamma \in Z_1(C_{\delta})$, there exists a $\gamma' \in \iota (\widehat{G})$ such that the homology classes $[\gamma]=[\gamma']$. Homology classes of $[\alpha_i] \in H_1^{\Delta}(\widehat{G})$ have representatives homotopic to $\{c_i \} _{i=1}^g.$} \todoRS{We need blue and purple homotopy here...}
%By Lemma~\ref{homotopyEquivalence}, $H_1^{\Delta}(\widehat{G}) \cong H_{\delta}^{(1)}.$

%Notice as a graph $G$ has singular but does not inherently have simplicial homology.

	 %Once we have established the relationship between $G$ or rather $\widehat{G}$ and $C_{\delta}$, we need to verify that our map $\rho:C_\epsilon \to C_{\delta}$ doesn't need any adjustment.

\section{Proof of Main Theorem}
\label{sec:proof}

We are now ready to prove Theorem~\ref{thm:main}. Let $\mu_\epsilon : \IC_\delta \rightarrow \IC_\epsilon$ for $\epsilon > \delta$ denote the chain map given by inclusion, and let $\mu_\epsilon^c : \IC_\delta^{(1)} \rightarrow \IC_\epsilon^{(1)}$ denote the associated inclusion map of one dimensional chain groups. The latter induces the map on one dimensional homology $\mu_\epsilon^h : H_1(\IC_\delta)\rightarrow H_1(\IC_\epsilon)$. Let $\gamma \in \IC_{\epsilon}^{(1)}$ denote a cycle, with $[ \gamma] \in H_1(\IC_\epsilon)$  the corresponding homology class.

\begin{proof}[Proof of Theorem~\ref{thm:main}]
%First, note that each of the $g$ cycles in $G$ must have been born at $\delta$ $(\approx 0)$ since the
%% \todoEmilie{What is ``immediate'' overlap?}
%overlap of $\delta$-balls in $U_{\delta}$ will create a cycle in the associated nerve complex. Furthermore, $\gamma_i$ will be fully triangulated in $\IC_\epsilon$ for $\ds \epsilon=\frac{\ell_i}{4}$. 
%\yusu{Changed here.}
Let $\{[\gamma_i]\}_{i=1}^{g}$ be a shortest homology basis for $H_1(\IC_\delta)$ corresponding to the shortest system of loops $\{c_1^*,\ldots,c_g^*\}$ of $G$. First, note that $\mu_\epsilon^c(\gamma_i)$ is a boundary cycle in $\IC_\epsilon$ for $\ds \epsilon=\frac{\ell_i}{4}$. 
This is due to the fact that, for any triple of points $x,y,z \in \gamma_i$, $\ds B\left(x,\frac{\ell_i}{4}\right) \bigcap B\left(y,\frac{\ell_i}{4}\right) \bigcap B\left(z,\frac{\ell_i}{4}\right) \neq \emptyset$.  Therefore $\gamma_i$ must die at $\ds \frac{\ell_i}{4}$ or earlier.  The rest of the proof consists of showing that:
\begin{itemize}
\item[A)] For $i=1,\ldots,g$, the $i^{th}$ cycle does not die before $\ds \epsilon= \frac{\ell_i}{4}$; and
\item[B)] No other cycles are created due to interference between cycles.
%\todoEmilie{I switched the order of these two, we had it backwards which one was linear independence and which was span. Double check me, but I'm pretty sure that it's right now.}
%Note from Ellen: Yes! Originally, we had written ``A) may be formulated as a spanning condition and B) as a linear independence condition'', and when I changed the ordering of those, I overcorrected and changed the ordering of the actual conditions A) and B), too.
%Note from Radmila: I think I've changed these back and forth several times ....
\end{itemize}

Notice that A) and B) can be reformulated to the language of bases, where condition A) is equivalent to a linear independence condition and B) is equivalent to a spanning condition. Therefore, the proof of Theorem~\ref{thm:main} follows from Proposition~\ref{prop:main} below.
\qed
\end{proof}

\begin{prop}
\label{prop:main}
For any $i=1,\ldots,g$, the set \[\{
[\mu_\epsilon^c(\gamma_i)], [\mu_\epsilon^c(\gamma_{i+1})],\ldots,
[\mu_\epsilon^c(\gamma_g)]\}\] is a basis for $H_1(\IC_\epsilon)$ where
$\ds\frac{\ell_{i-1}}{4} \leq \epsilon < \frac{\ell_i}{4}$ and $\ell_0= 0$.
\end{prop}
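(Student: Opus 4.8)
The plan is to prove Proposition~\ref{prop:main} by induction on $i$, running it downward from $i = g+1$ (the trivial base case, where the set is empty and $\epsilon < \ell_1/4$ forces $H_1(\IC_\epsilon) \cong H_1(\IC_\delta) \cong H_1(G)$ to be spanned by the full shortest basis) down to $i = 1$. At each stage, as $\epsilon$ crosses the threshold $\ell_i/4$, I want to show exactly one thing happens: the class $[\mu_\epsilon^c(\gamma_i)]$ dies, and nothing else changes in the sense that the images of $[\gamma_{i+1}], \ldots, [\gamma_g]$ remain independent and continue to span. The fact that $[\mu_\epsilon^c(\gamma_i)]$ dies at or before $\ell_i/4$ is already established in the proof of Theorem~\ref{thm:main} (the triple-intersection argument). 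So the two substantive claims, matching (A) and (B) in the theorem's proof, are: (A) for $\epsilon < \ell_i/4$, the class $[\mu_\epsilon^c(\gamma_i)]$ is nonzero and independent of $[\mu_\epsilon^c(\gamma_{i+1})], \ldots, [\mu_\epsilon^c(\gamma_g)]$; and (B) these $g - i + 1$ classes span $H_1(\IC_\epsilon)$.

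For the spanning claim (B), I would argue as follows. Pick $\delta' < \epsilon$ small enough that Lemmas~\ref{lemma:homotopy-equivalence}--\ref{lemma:shortest} apply with $\delta'$ in place of $\delta$; then $H_1(\IC_{\delta'})$ has a shortest homology basis $\{[\gamma_1'], \ldots, [\gamma_g']\}$ whose length-sequence is $\ell_1 \le \cdots \le \ell_g$, and the inclusion-induced map $H_1(\IC_{\delta'}) \to H_1(\IC_\epsilon)$ is what we must understand. Each $\gamma_j'$ with $j \ge i$ has length $\ell_j \ge \ell_i > 4\epsilon$; I claim its image stays nonzero and these images stay independent (this is exactly (A), applied at the appropriate level), while each $\gamma_j'$ with $j < i$ has length $\ell_j \le \ell_{i-1} \le 4\epsilon$, hence (by the triple-intersection / cone argument) maps to zero. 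Since the $\gamma_j'$ span $H_1(\IC_{\delta'})$ and that group surjects onto $H_1(\IC_\epsilon)$ (again by the nerve lemma both are $\cong H_1(G)$, and the map between them is an iso — actually I should be careful: $\IC_\epsilon$ for large $\epsilon$ is not homotopy equivalent to $G$, so I only get that $H_1(\IC_\epsilon)$ is a quotient of $H_1(\IC_{\delta'})$ via the chain-level surjectivity argument from Lemma~\ref{lemma:cycle-homotopy}, which goes through verbatim as long as the relevant triangles lie in $\IC_\epsilon$, which they do since $\epsilon > \delta'$). Therefore $H_1(\IC_\epsilon)$ is spanned by the images of $\gamma_i', \ldots, \gamma_g'$, and since by induction/rank-counting these have the same span as the images of the original $\mu_\epsilon^c(\gamma_i), \ldots, \mu_\epsilon^c(\gamma_g)$, (B) follows.

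The heart of the matter, and what I expect to be the main obstacle, is claim (A): showing that a cycle of length $\ell$ in $\IC_{\delta'}$ does \emph{not} become a boundary in $\IC_\epsilon$ as long as $\epsilon < \ell/4$, and moreover that distinct long basis cycles do not become dependent prematurely. The upper bound $\ell/4$ for death is the easy cone-point direction; the lower bound requires showing that if $\mu_\epsilon^c(\gamma_i) = \bdr c$ for some $2$-chain $c$ in $\IC_\epsilon$, then one can extract from $c$ a loop in $G$ of length $\le 4\epsilon < \ell_i$ that is homologically nontrivial and independent of the shorter loops $c_1^*, \ldots, c_{i-1}^*$, contradicting minimality of the shortest system of loops (Lemma~\ref{lemma:shortest}). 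Concretely I would take each $2$-simplex $[\bar a, \bar b, \bar c]$ appearing in $c$, note that the pairwise intersections of $B(a,\epsilon), B(b,\epsilon), B(c,\epsilon)$ are nonempty so the three generating points $a,b,c$ lie within pairwise distance $2\epsilon$ in $G$; pick a witness point in the triple intersection and use it to "fill" the corresponding triangle of $\gamma_i$'s image in $G$ by three short paths. Assembling these fillings over all simplices of $c$ exhibits $\mu_\epsilon^c(\gamma_i)$'s carrier as homologous in $G$ to a $1$-cycle supported on paths of total "radius" $\le 2\epsilon$; a covering/short-loop argument (in the spirit of Lemma~\ref{lemma:homotopy-equivalence}) then produces an actual loop of length $< \ell_i$ in $G$ carrying a class that, together with $c_1^*, \ldots, c_{i-1}^*$, would give a system of loops with lexicographically smaller length-sequence — the contradiction. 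The independence of $[\mu_\epsilon^c(\gamma_i)], \ldots, [\mu_\epsilon^c(\gamma_g)]$ reduces to the same principle: a nontrivial $\mathbb{Z}_2$-relation among them plus a bounding $2$-chain in $\IC_\epsilon$ yields a nonzero class in $H_1(G)$ supported on loops of length $\le 4\epsilon$ lying in $\mathrm{span}(c_1^*, \ldots, c_{i-1}^*)$ yet independent of it, which is absurd. Making the "fill the triangles and re-extract a short loop in $G$" step fully rigorous — tracking how a $2$-chain in the \v{C}ech complex descends to a genuine homology in the graph with controlled length — is the step I would budget the most care for.
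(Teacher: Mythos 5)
Your high-level plan matches the paper's: split the proof into the independence condition (A) and the spanning condition (B), prove (A) by assuming a relation bounds a $2$-chain in $\IC_\epsilon$ and contradicting the minimality of the shortest basis, and prove (B) by showing the inclusion-induced map on $H_1$ is surjective. However, the proposal leaves a genuine gap, and you flag it yourself: you never actually construct the device that makes both halves go through, and the ``descent to $G$, find a single short loop'' substitute you sketch does not readily close.

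The paper's key technical tool is a chain map $\rho \colon \IC_\epsilon^{(1)} \to \IC_\delta^{(1)}$ that restricts to the identity on $\IC_\delta^{(1)}$ and sends every edge $[\bar u, \bar v]$ of $\IC_\epsilon$ to a $1$-chain of length at most $2\epsilon$ (by routing through a fixed witness point $w \in B(u,\epsilon)\cap B(v,\epsilon)$). Applying $\rho$ to $\gamma = \partial\alpha = \sum_k t_k$ gives $\sum_{j\ge i} c_j\gamma_j = \sum_k \rho(t_k)$ as an equation of $1$-chains \emph{inside $\IC_\delta$}, with no need to descend to $G$ and re-ascend. The decomposition $\rho(t_k) = \sum_{n=0}^2 \bigl[\pi_n^k + P_n^k - \pi_{(n+1\bmod 3)}^k\bigr]$ exhibits each $\rho(t_k)$ not as ``a short loop'' but as a \emph{sum of three cycles} each of length at most $4\epsilon < \ell_i$; by the shortest-basis exchange property in $H_1(\IC_\delta)$, every such short cycle is in the span of $\{\gamma_1,\dots,\gamma_{i-1}\}$, which forces all $c_j = 0$. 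Your version --- fill each triangle of the $2$-chain with short paths in $G$, re-extract a loop of length $\le 4\epsilon$, show it is independent of $c_1^*,\dots,c_{i-1}^*$ --- is both imprecise (the filled $2$-chain yields a sum of short cycles, not one short loop, and you need each summand to lie \emph{in} that span, not be independent of it) and unnecessarily indirect: working in $\IC_\delta$ via $\rho$ avoids the delicate lifting problem you yourself identify as the part you would ``budget the most care for.''

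For (B), your plan to re-run Lemma~\ref{lemma:cycle-homotopy} for the inclusion $\IC_{\delta'}\hookrightarrow\IC_\epsilon$ does not ``go through verbatim.'' That lemma's argument hinges on $\widehat G$ being a $1$-complex, so every vertex of $\IC_\delta$ is covered by a unique edge of $\widehat G$ and the divide-and-conquer into Sublemma~1 and Sublemma~2 pieces makes sense; there is no analogous ``covering edge'' structure when both source and target are full \v{C}ech complexes. The paper instead proves surjectivity by showing $\rho$ is a right homotopy inverse: $[\mu_\epsilon^c(\rho(\eta))] = [\eta]$, established edge-by-edge using triangles that exist in $\IC_\epsilon$. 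Once you have $\rho$, this is short. Finally, the downward-induction framing adds nothing (the paper's proof is direct, and your stated base case, with $i=g+1$ yet $\epsilon<\ell_1/4$, is internally inconsistent). The missing idea in one sentence: build an explicit retraction $\rho$ from $\IC_\epsilon^{(1)}$ back to $\IC_\delta^{(1)}$ with controlled edge-image lengths, and do all the algebra in $\IC_\delta$.
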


\begin{proof}[Proposition~\ref{prop:main}]
We will prove the two conditions A) and B).

For A), we show that $\ds\sum_{j=i}^g c_j [\mu_\epsilon^c(\gamma_j)] = [0]$
implies $c_j=0$ for all $j$. Let $\gamma = \ds\sum_{j=i}^g c_j
\mu_\epsilon^c(\gamma_j)$ be a cycle representing the trivial class $[0]=[\gamma] \in H_1(\IC_\epsilon)$.
Assume, by way of contradiction, that there exists $j$ with $i\leq j\leq g$ such that $c_j \neq 0$. Since $[\gamma]=[0],$
 there exists a $2$-dimensional chain $\alpha \in \IC_\epsilon$ having $\gamma$ as its boundary, i.e., $\partial
\alpha = \gamma$. Let $\ds \alpha = \sum_{k \in J} \Delta_k$ where, for some index set $J$, $\{\Delta_k\}_{k \in J}$ is the set of $2$-simplices in the triangulation of $\alpha$, and where for each $k$,
$t_k:=\partial \Delta_k \in \IC_\epsilon^{(1)}.$
% Viewing $\alpha$ as a triangulation, there must exist 2-simplicies (triangles) $\Delta_k$ such that $\ds \alpha =
%\sum_k \Delta_k$.  For every $k$, we also define $t_k:=\partial \Delta_k \in \IC_\epsilon^{(1)}$.
Then $\ds \gamma = \partial \alpha = \partial \sum_{k} \Delta_k = \sum_k \partial \Delta_k = \sum_k t_k$, i.e.
\begin{equation}
\label{eq:gamma}
\gamma= \ds \sum_{j=i}^g c_j \mu_\epsilon^c(\gamma_j) = \sum_k t_k.
\end{equation}

%%%%%%%%%%%%%%%%%%%%%%%%%%%%%%%%%%%%%%%%%%%%%%%
We aim to contradict the fact that some $c_j\neq 0$ in the above sum. 
%\textit{\color{blue} RS changes start: suggested minimal changes marked green in the text below}
%\yusu{I modified the following, changed what RS put here earlier. Modified text in green color.}  

To this end, we define a map $\rho:\IC_\epsilon^{(1)}\rightarrow \IC_\delta^{(1)}$ for $\epsilon > \delta$ by specifying its effect on 
 edges in the \Cech{} complex $\IC_\epsilon$ and  extending the map linearly to all $1$--chains in  $\IC_\epsilon^{(1)}.$
%a basis for $\IC_\epsilon^{(1)}$, the edges in the \Cech{} complex $\IC_\epsilon$, and extending the map linearly to the rest of $\IC_\epsilon^{(1)}$. 
First, there is a natural bijection between the set of vertices of $\IC_\delta$ and that of $\IC_\epsilon$; specifically, the vertex in $\IC_\delta$ representing the $\delta$-ball $B(u, \delta)$ corresponds to the vertex in $\IC_\epsilon$ representing the covering element $B(u, \epsilon)$. 
For simplicity, we assume $\IC_\delta^0 = \IC_\epsilon^0$ and use $\bar{u}$ to denote the vertex in $\IC_\delta$ (resp. in $\IC_\epsilon$) representing the covering element $B(u, \delta)$ (resp. $B(u, \epsilon)$). 
 %\yusu{The previous few sentences probably should be moved earlier.} 
Now, given an edge $e = [\bar u, \bar v] \in \IC_\epsilon^{(1)}$, we describe its image $\rho(e)$ in $\IC_\delta^{(1)}$. 
If $[\bar u,\bar v]$ spans an edge in $\IC_\delta$, then we set $\rho(e)$ to be that edge. 
Otherwise, the existence of the edge $[\bar u,\bar v] \in \IC_\epsilon^{(1)}$ implies, by definition, that $B(u,\epsilon) \cap B(v,\epsilon) \neq
\emptyset$. 
Choose an arbitrary (but fixed with respect to $\bar u$ and $\bar v$) point $w \in  B(u,\epsilon) \cap B(v,\epsilon) \subseteq G$, and define $\rho(e)$ to be the concatenation of a shortest $1$-chain in  $\IC_\delta^{(1)}$ between $\bar u$ and $\bar w$ and a shortest 1-chain between $\bar w$ and $\bar v$. 
We claim that the length of the $1$-chain $\rho(e)$ is at most $2\epsilon$. Indeed, by construction and Lemma~\ref{claim:equaldist}, we have: 
\begin{align}
\mathrm{length}(\rho(e)) &= d_{\IC_\delta}(\bar u, \bar w) + d_{\IC_\delta}(\bar w, \bar v) = d_G(u, w) + d_G(w, v) \le \epsilon + \epsilon = 2\epsilon. \label{eqn:rholength}
\end{align}

%\yusu{Radmila, I think it would be good if $d_\delta(\bar x,  \bar y) = d_G(x, y)$ is stated as a claim or in your Lemma 3. I also think we should change the definition of $d_\delta$ -- see my comments in your Definition 1.} 

Notice that the restriction $\rho|_{\IC_\delta^{(1)}}:\IC_\delta^{(1)}\rightarrow \IC_\delta^{(1)}$ is the identity mapping. Clearly $\rho|_{\IC_\delta^{(1)}}$ is the identity on the basis elements, the edges in the \Cech{} complex $\IC_\delta^{(1)}$, since there is no shorter path within $\IC_\delta^{(1)}$ than the edge itself. Then, by linearity, $\rho|_{\IC_\delta^{(1)}}$ is the identity on all of $\IC_\delta^{(1)}$.
%\todoEmilie{I changed the reasoning on this map being the identity on $\IC_\delta^{(1)}$. It used to be``since the set $\{\gamma_i\}_{i}$ is a shortest cycle basis for $C^{(1)}_{\delta}$.'' Double check that what I wrote is correct. I wasn't sure that the shortest cycle basis reason was quite right. Or at least what I changed it to made more sense to me.}
%Note from Ellen: Yes, this is better!
%\todoMaria{right? RS: added $^1$, thanks}.
Additionally, $\rho(\mu_\epsilon^c(\gamma_j))=\rho(\gamma_j)=\gamma_j$
since $\mu_\epsilon^c(\gamma_j)= \gamma_j \in \IC_\epsilon^{(1)}.$ Applying $\rho$ to equation~\eqref{eq:gamma} we obtain the following:
% I have simplified the wording in the last sentence but thanks for  clarification}
\begin{equation}
\label{eq:forli2}
\ds \rho(\gamma)= \sum_{j=i}^g c_j\gamma_j= \sum_k \rho(t_k).
\end{equation}

Next, we show that for each $k$, $\rho(t_k)$  is the sum of short cycles. Notice that $t_k = [w_0^k, w_1^k, w_2^k]=\partial \Delta_k$ represents a trivial cycle in $\IC^{(1)}_{\epsilon}$, so
%On the other hand, consider $\ds \sum_k \rho(t_k).$ For each $k$, let $t_k$ be given by $t_k = [w_0^k, w_1^k, w_2^k]$. Because $t_k$ is the boundary of a filled in triangle $\Delta_k$,
there must exist some point $w^k \in \ds \bigcap_{n=0}^2 B(w_{n}^k, \epsilon).$ See Figure~\ref{fig:triangle}.
\begin{figure}[h]
\centering
\includegraphics[width = \textwidth]{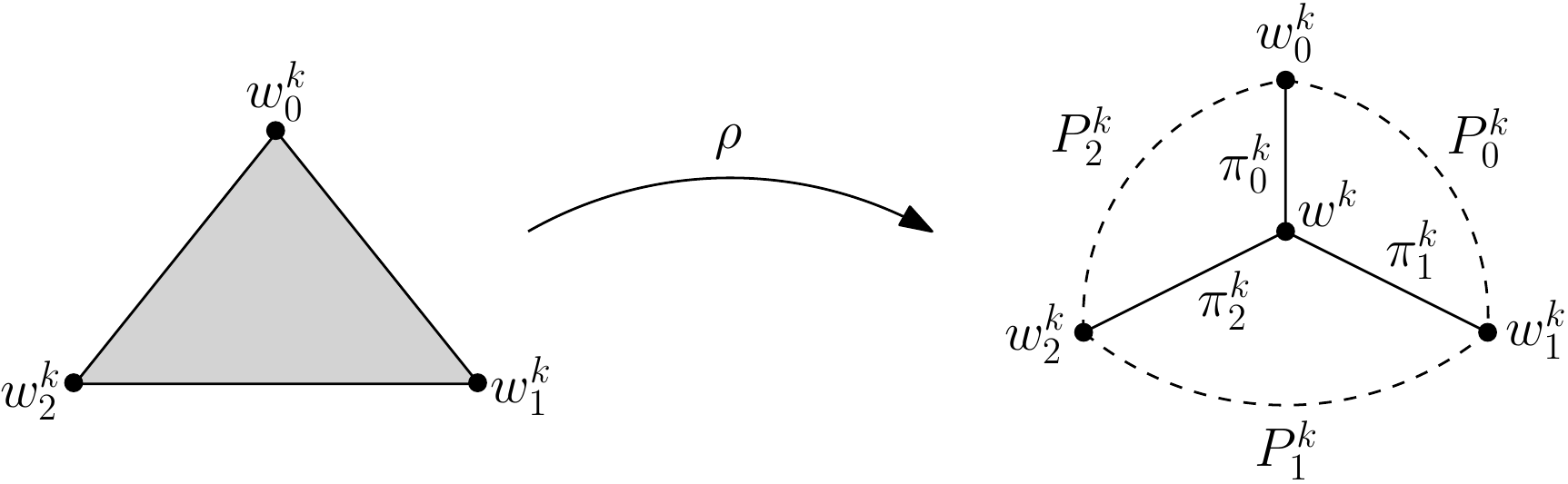}
\caption{Action of $\rho$ on the triangle $t_k = [w_0^k, w_1^k, w_2^k]$. Notice the three cycles contained in $\rho(t_k)$.}
\label{fig:triangle}
\end{figure}

Consequently, for $n=0,1,2$, there exist the following paths $\pi_n^{k}$ and $P_n^k$ in the \Cech{} complex $\IC^{(1)}_{\delta}:$
\begin{itemize}
\item  $\pi_n^k=[w^k,w_n^k]$, which has length less than or equal to $\epsilon$,  and
\item $\ds P_n^k=\rho([w_n^k, w_{(n+1 \mod 3)}^k]),$ which has length at
most $2\epsilon$ by \eqref{eqn:rholength}.
%, since each $P_n^k$ must be contained within the intersection of two $\epsilon$-balls.
\end{itemize}
Therefore, $\rho(t_k)=\displaystyle \sum_{n=0}^{2} [\pi_n^k+P_n^k-\pi_{(n+1\mod 3)}^k]$ is the sum of three cycles, each of
length at most $\epsilon+\epsilon+2\epsilon = 4\epsilon$. Since the length of  $\rho(t_k)$ is less than $4 \epsilon < \ell_i$, $\rho(t_k)$ can be expressed in terms of the shortest basis
$\ds \{\gamma_j\}_{j=1}^{i-1}$ for $H_1(\IC_\delta): $

\begin{eqnarray}
\rho(\gamma) &=&\ds \sum_k \rho(t_k) = \sum_k \sum_{j=1}^{i-1} c_j^k \gamma_j = \sum_{j=1}^{i-1} c'_j \gamma_j.\\
                       &\implies& \sum_{j=i}^g c_j\gamma_j  \stackrel{\eqref{eq:forli2}}{=}\sum_{j=1}^{i-1}c_j' \gamma_j \\
                       &\implies&   \sum_{j=1}^{i-1} c_j \gamma_j +\sum_{j=i}^g (-c'_j)\gamma_j = 0.
\end{eqnarray}
%\todoRS{Simplicial complex does not have a basis i.e. a generator so these should be generators  of a chain group. I think we need to switch here to $C^{(1)}_\delta$ and add $C^(1)_{\epsilon}.$  at the end}
As the set $\ds \{\gamma_i\}_{i=1}^g$ is a shortest basis for $H_1(\IC_\delta)$, the coefficients in the above sums must all be zero, that is $c_j = 0$ for all $j$, which contradicts our initial assumption.
Therefore, the set $\{[\mu_\epsilon^c(\gamma_i)], [\mu_\epsilon^c(\gamma_{i+1})],\ldots,
[\mu_\epsilon^c(\gamma_g)]\}$ is linearly independent in $H_1(\IC_\epsilon)$. In particular, $\gamma_i$ does not become trivial before $\displaystyle\frac{\ell_i}{4}$.
%for every $i=1,\ldots,g$.
%\todoRS{I am not sure that we need: for every $i=1,\ldots,g$ so I have commented it out but I may be wrong.}

Next, to prove B), we show that the map $\mu_\epsilon^h : H_1(\IC_\delta) \rightarrow H_1(\IC_\epsilon)$ is surjective by showing that it has a right inverse up to homotopy.
In particular, we will show that for every  $[\eta] \in H_1(\IC_\epsilon)$,
\begin{equation}\label{LL}
\mu^{h}_{\epsilon}([\rho(\eta)])=[(\mu_\epsilon^c \circ \rho)(\eta)] = [\eta] \in H_1(\IC_\epsilon) \end{equation}
where the chain $\eta\in \IC_\epsilon^{(1)}$ is a geometric realization of the class $[\eta]$.

Consider a cycle $\eta= \langle u_0, u_1, \ldots, u_k, u_0\rangle \in \IC_\epsilon^{(1)}$ representing $[\eta] \in H_1(\IC_\epsilon).$
%\yusu{I modified here and later, see the darkblue color.}
%Let $p_j = \rho([u_j, u_{j+1}]) = \{u_j, v_1^j,\ldots, v_{m_j}^j, u_{j+1}\}$ 
Set $p_j = \rho([u_j, u_{j+1}]) = \langle u_j, v_1^j,\ldots, v_{m_j}^j, u_{j+1}\rangle$, for $j=0,1,\ldots,k$ and $u_{k+1}=u_0.$  
%be a shortest path between $u_j$ and $u_{j+1}$ passing through $B(u_j,\epsilon) \cap B(u_{j+1},\epsilon),$  for $j=0,1,\ldots,k$ and $u_{k+1}=u_0.$
Then the image $\rho(\eta)$ is just a concatenation of these paths  $\rho(\eta)=  p_0 +  p_1 +  \cdots + p_k  \in \IC_\delta^{(1)}$.
%Next, $\mu_\epsilon^c(\rho(\eta))= \{p_0, p_1, \ldots, p_k\}.$  \todoEllen{Justify this?}
Since $\mu_\epsilon^c$ is induced by inclusion, we abuse the  notation slightly and use $p_0+p_1 +\cdots + p_k$ to denote the image $\mu_\epsilon^c(\rho(\eta))$ in $\IC_\epsilon^{(1)}$ as well. 

To show equation~\eqref{LL} holds, we need to prove that $[\mu_\epsilon^c(\rho(\eta))]_{\IC_\epsilon}=[\eta]_{\IC_\epsilon}$, which is achieved by showing that $p_j$ is path homotopic to  edge $[u_j, u_{j+1}]$ in $\IC_\epsilon$ for all $j=0,1,\ldots,k$ and $u_{k+1}=u_0.$ 

%To show equation~\eqref{LL} holds, we need to prove that $[\rho(\eta)]=[\eta]$ by showing that $p_j$ is path homotopic to  
%the path determined by $ \{u_j, u_{j+1}\}$
%%$ i.e.  $p_j \simeq \{u_j, u_{j+1}\}$ in $\IC_\epsilon$
%for all $j=0,1,\ldots,k$ and $u_{k+1}=u_0.$
%%This will prove that $[(\mu_\epsilon^c \circ \rho)(\eta)] = [\eta].$

%Since $B(u_j, \epsilon) \cap B(u_{j+1},\epsilon) \neq\emptyset$,  there exists some point $ v_l^j \in p_j$ such
%that $v_l^j \in B(u_j, \epsilon) \cap B(u_{j+1},\epsilon)$. 
%Let $p_j^{(1)}$
%be the path from $u_j$ to $v_l^j$ and $p_j^{(2)}$ be the path from $v_l^j$ to
%$u_{j+1}$ (see Figure~\ref{fig:fig2}). 
%Because these are shortest paths and $v_l^j$ is in the intersection of  $B(u_j, \epsilon)$ and $B(u_{j+1}, \epsilon)$, %of length greater than $\epsilon$,
%it follows that
%$p_j^{(1)} \subseteq B(u_j,\epsilon)$ and $p_j^{(2)} \subseteq
%B(u_{j+1},\epsilon)$. 
To this end, note that by construction of $\rho([u_j, u_{j+1}])$, the path $p_j$ is (i) either the edge $[u_j, u_{j+1}]$; or (ii) consists of two shortest 1-chains $p_j^{(1)}$ and 
$p_j^{(2)}$, where $p_j^{(1)}$ and 
$p_j^{(2)}$ are from $u_j$ to $v_l^j$ and $v_l^j$ to
$u_{j+1}$, respectively (see Figure~\ref{fig:fig2}) -- here, $v_l^j$ corresponds to the node $\bar w$ in the definition of $\rho(e)$ earlier. 
If it is case (i), then obviously, $p_j$ is homotopic to edge $[u_j, u_{j+1}]$ (as they are the same). 
Now consider case (ii). Note by definition of $\rho$, $d_{\IC_\delta}(u_j, v_l^j) \le \epsilon$ and $d_{\IC_\delta}(v_l^j, u_{j+1}) \le \epsilon$. As $p_j^{(1)}$ (resp. $p_j^{(2)}$) is a shortest path between $u_j$ and $v_l^j$ (resp. between $v_l^j$ and $u_{j+1}$), it then follows that $d_G(u_j, v_n^j) = d_{\IC_\delta}(u_j, v_n^j) \le \epsilon$ for all $0 \leq n < l$. Similarly, $d_G(u_{j+1}, v_n^j) \le \epsilon$ for  all $l\leq n < m_n$. 
Therefore, each of the following is a  $2$-dimensional simplex:  the triangle $[u_j, v_n^j, v_{n+1}^j] \in
\IC_\epsilon$ for all $0 \leq n < l$ and the triangle $[u_{j+1}, v_n^j, v_{n+1}^j]\in
\IC_\epsilon$ for all $l\leq n < m_n$.
It then follows that the path (1-chain) $p_i$ is homotopic to the edge $[u_j, u_{j+1}]$ for case (ii), as well. Concatenating these homotopies proves the homotopy equivalence of $\eta$ and $\rho(\eta)$. Hence, $[\mu_\epsilon^c(\rho(\eta))] = [\eta]$ which establishes equation~\eqref{LL}. 
%This shows homotopy equivalence of $\eta$ and $\rho(\eta)$ and therefore $[\rho(\eta)] = [\eta]$ which establishes equation~\eqref{LL}.

\begin{figure}[ht]
\centering
\includegraphics[scale=0.8]{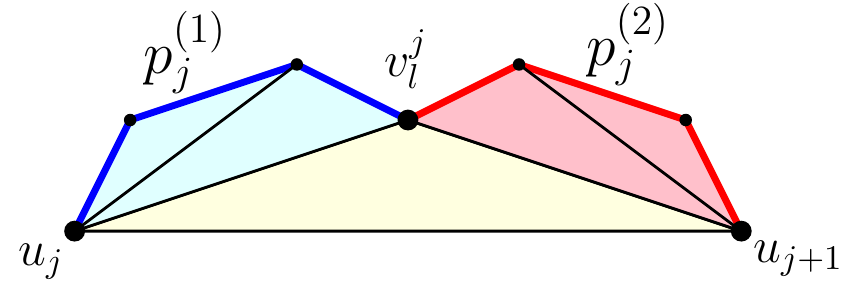}
\caption{ A part of $\IC_{\delta}$ used to illustrate that $[\rho(\eta)]=[\eta].$ In particular, each edge $[u_j, u_{j+1}]$ will be mapped by $\rho$ to a chain of edges. The path $p_j^{(1)}$ is colored in blue, and $p_j^{(2)}$ is colored in red. The homotopy is realized in two $2$-dimensional simplices (represented by the blue/red shading) that exist in $\IC_{\delta}$ based  on the \Cech{} construction.}
\label{fig:fig2}
\end{figure}

 Notice that  $[\rho(\eta)] = \ds \sum_{j=i}^g c_j [\gamma_j] \in H_1(\IC_\delta)$
since $\ds\frac{\ell_{i-1}}{4} \leq \epsilon < \frac{\ell_i}{4}$.  By equation~\eqref{LL} we have
\begin{eqnarray*}
[\eta]=\mu^{h}_{\epsilon}([\rho(\eta)])& =& \ds \mu_\epsilon^h(\sum_{j=i}^g c_j [\gamma_j])\\
&=& \sum_{j=i}^g c_j [\mu_\epsilon^c(\gamma_j)]\in \text{Span}(\{[\mu_\epsilon^c(\gamma_j)]\}_{j\geq i}),
\end{eqnarray*}
which completes the proof of the surjectivity of $\mu^{h}_{\epsilon}.$ This establishes the spanning condition B). %\ds \mu_\epsilon^h(\sum_{j=i}^g c_j [\gamma_j])= \sum_{j=i}^g c_j [\mu_\epsilon^c(\gamma_j)]$
In other words, if $[\eta]$ is a homology class in $H_1(\IC_\epsilon)$ then it must be formed only from homology classes $[\mu_\epsilon^c(\gamma_j)]$ for $j \geq i$, and thus no additional cycles are created.
\qed
%\iffalse
%Thus, we conclude from the right hand sides of equations~\eqref{eq:lhs} and \eqref{eq:rhs} and the lemma that $[\eta] = \sum c_i [\mu_\epsilon^c(\gamma_i)].$ Hence, we have shown that $[\eta] \in Span(\{[\mu_\epsilon^c(\gamma_i)]\})$, as desired. As it is a linearly independent spanning set for $H_1(\IC_\epsilon)$, we conclude that the set $\{
%[\mu_\epsilon^c(\gamma_i)], [\mu_\epsilon^c(\gamma_{i+1})],\ldots,
%[\mu_\epsilon^c(\gamma_g)]\}$ is a basis for $H_1(\IC_\epsilon)$ for every $i=1,\ldots,g$ and the indicated value of $\epsilon.$\fi
\end{proof}

\section{Future Work}
\label{sec:future}

The overarching theme of this work is to show how persistence may be used to obtain qualitative-quantitative summaries of
metric graphs that reflect the underlying topology
of the graphs. We obtained a complete characterization of all possible intrinsic \Cech{}
persistence diagrams in homological dimension one for metric graphs. What is currently known regarding the characterization of intrinsic \Cech{} persistence diagrams for metric graphs is summarized in a diagram shown in Figure~\ref{fig:IC-Future}. The horizontal axis represents the homological dimension and the vertical axis represents the genus of a graph. In this setting,
the previous results of Adamaszek, et al. \cite{AdamaszekAdams2015} who consider the intrinsic \Cech{} persistence diagrams in all
dimensions for a graph that consists of a single loop, lie on the horizontal strip at height one, while the results in this paper constitute the blue vertical strip.
The rest of the upper-right quadrant is unknown, and our hope is to make %Our work characterizes the one dimensional intrinsic \Cech{} persistence diagrams of any graph, whereas Adamaszek, et al. consider the intrinsic \Cech{} persistence diagrams in all dimensions for a single cycle. This landscape is summarized in Figure~\ref{fig:IC-Future}. We hope to make
further progress toward a complete characterization of the intrinsic \Cech{} persistence diagrams associated to arbitrary metric graphs. Moreover, we aim to generalize our results
%by characterizing both the intrinsic \Cech{} persistence diagrams in all dimensions of arbitrary graphs, as well as the corresponding results
to the Vietoris-Rips complex.%or other complexes related to graphs such as neighborhood or devoid complexes.
\begin{figure}[ht]
\centering
\includegraphics[width=0.75\linewidth]{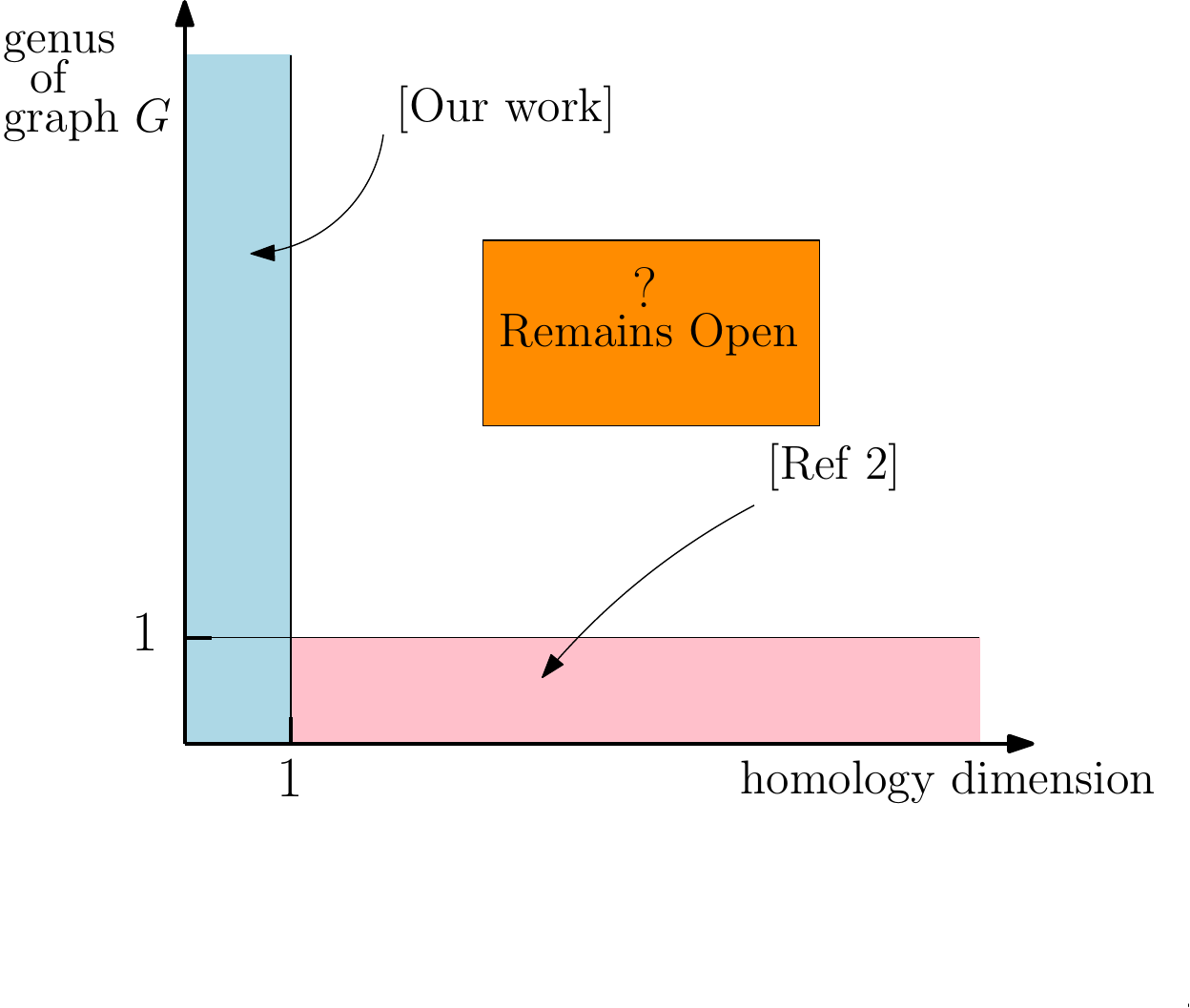}
\vspace{-1.25 cm}
\caption{Figure summarizing the results from this paper and from \cite{AdamaszekAdams2015}.}
\label{fig:IC-Future}
\end{figure}

The choice of a particular complex may be inspired by particular graph features that one is interested in. A \emph{graph motif} is usually thought of as a graph on a small number of vertices (in general, any graph pattern can be a motif). Counting the number of small motifs in a graph is equivalent to the subgraph isomorphism problem, which is NP-complete. Since persistence has a polynomial time computational complexity, the question we would like to answer is: can the intrinsic \Cech{} or other related persistence diagrams be used to determine or approximate graph motif counts? Additionally,  the local version of this question,  the number of graph motifs incident with a particular vertex, may be approached via the local homology at a vertex (homology of the $k$-neighborhood of a vertex relative to its boundary). As a start, persistence-based characterizations of a class of graph motifs should be obtained. Depending on the type of characterization obtained, we would be interested in determining to what extent our persistence-based summaries could be useful in the classification of the motifs present in a query graph.

Ultimately, the complete or partial characterization of the topological information about a graph that is captured by persistent homology associated to various chain complex constructions is closely related to comparing their discriminative powers. In particular, we are interested in comparing the \Cech~and persistence distortion distance summaries.

The intrinsic \Cech{} filtration and associated persistence diagrams allow one to define the \emph{intrinsic
\Cech{} distance}, $d_{IC}$, between two metric graphs $(G_1, d_{G_1})$ and $(G_2,
d_{G_2})$. This distance, introduced in \cite{ChazalSilvaOudot2014}, is defined as follows:
\[ d_{IC}(G_1, G_2) := d_B(Dg_1 IC_{G_1}, Dg_1 IC_{G_2}), \]
where $d_B$ is the bottleneck distance between the two intrinsic \Cech{}
persistence diagrams in dimension 1.

The \emph{persistence distortion distance}, $d_{PD}$, that was first introduced in \cite{DeyShiWang2015}, is more closely related to the metric properties of a graph. Given a base point $s \in |G|$, define $f_s : |G| \rightarrow \R$ to be the geodesic distance to the base point $s$, i.e,
$f_s(x) = d_G(s,x)$ for all $x \in |G|$. Then $Dg_1 f_s$ is the 1st-extended
persistence diagram \cite{Cohen-Steiner2009} associated to the sublevel set filtration induced by $f_s$. One may do this for any given
base point in the metric graph, yielding a set of persistence
diagrams for each graph. Let
\begin{align*}
\phi : |G| &\rightarrow Sp Dg\\
       s   &\mapsto Dg f_s
\end{align*}
where $Sp Dg$ denotes the space of all persistence diagrams. Then $\phi(|G|) \subset
Sp Dg$ is called the \emph{persistence distortion} of $G$. The
\emph{persistence distortion distance} between two metric graphs is defined to be the
Hausdorff distance between their persistence distortion sets:
\[ d_{PD}(G_1, G_2) := d_H(\phi(|G_1|), \phi(|G_2|)). \]

A natural question to ask is whether or not $d_{PD}$ is more discriminative than $d_{IC}$, i.e., whether or not there exists a constant $c > 0$ such that
\[ d_{IC}(G_1, G_2) \leq c \cdot d_{PD}(G_1, G_2).\]
%Theorem~\ref{thm:main}, which completely characterizes the one-dimensional intrinsic \Cech{} persistence diagrams for arbitrary metric graphs, provides insight into establishing the inequality.
We are currently working on extending preliminary results that establish the inequality for certain classes of metric graphs to arbitrary input graphs.

\section*{Acknowledgements}
We are grateful for the Women in Computational Topology (WinCompTop) workshop for initiating our research collaboration. In particular, participant travel support was made possible through an NSF grant (NSF-DMS-1619908), and some additional travel support and social outings throughout the workshop were made possible through a gift from Microsoft Research. The Institute for Mathematics and its Applications generously offered the use of their facilities, their experienced staff to facilitate conference logistics, and refreshments. We appreciate their continued support of the applied algebraic topology community with regard to the WinCompTop Workshop, the special thematic program \emph{Scientific and Engineering Applications of Algebraic Topology} held during the 2013-2014 academic year, and the Applied Algebraic Topology Network WebEx talks. Finally, we would like to thank the AWM ADVANCE grant for travel support for organizers and speakers to attend the WinCompTop special session at the AWM Research Symposium in April 2017.

EP was partially supported by the Asymmetric Resilient Cybersecurity Initiative at Pacific Northwest National Laboratory,
part of the Laboratory Directed Research and Development Program at PNNL, a multi-program national laboratory operated by
Battelle for the U.S. Department of Energy.
During the completion of this project, RS was partially supported by the Simons Collaboration Grant,
BW was partially supported by NSF-IIS-1513616, and
YW was partially supported by NSF-CCF-1526513 and NSF-CCF-1618247.

%A graph motif is typically thought of as a graph on a small number of
%    vertices (though in general can be any graph pattern). Researchers are
%    often interested in how many small motifs a specific graph contains.
%    Counting these motifs is equivalent to the subgraph isomorphism problem
%    which is NP-complete. Since persistence has a polynomial time algorithm
%    to compute it, the question is: can we use the intrinsic \Cech{}
%    persistence diagram or the image of our graph in the space of all
%    persistence diagrams using the geodesic distance to a chosen base point
%    in our query graph, over all choices of base points, to approximate or
%    learn something about graph motif counts? Additionally, we often want
%    to do this locally, i.e., count the graph motifs incident with a particular
%    vertex. Can we use local homology at a vertex (homology of the
%    $k$-neighborhood of a vertex relative to its boundary) as a proxy for
%    motif count at the vertex?
%
%Given some ``known
%graphs,'' we characterize the motifs in the graph. Each $k$-neighborhood of a
%vertex is considered to be a motif and we characterize them using the
%persistence diagram. Then given an ``unknown graph'' we find the
%$k$-neighborhoods and see how the persistence diagrams match with our known
%motifs.

%--------------------------------------------------------------
\bibliographystyle{plain}
\bibliography{references}
%--------------------------------------------------------------

\end{document}